\DeclareMathOperator{\dist}{dist}
\begin{document}

\newtheorem{theorem}{Theorem}[section]
\newtheorem{result}[theorem]{Result}
\newtheorem{fact}[theorem]{Fact}
\newtheorem{conjecture}[theorem]{Conjecture}
\newtheorem{lemma}[theorem]{Lemma}
\newtheorem{proposition}[theorem]{Proposition}
\newtheorem{corollary}[theorem]{Corollary}
\newtheorem{facts}[theorem]{Facts}
\newtheorem{props}[theorem]{Properties}
\newtheorem*{thmA}{Theorem A}
\newtheorem{ex}[theorem]{Example}
\theoremstyle{definition}
\newtheorem{definition}[theorem]{Definition}
\newtheorem*{remark}{Remark}
\newtheorem{example}[theorem]{Example}
\newtheorem*{defna}{Definition}

\newcommand{\notes} {\noindent \textbf{Notes.  }}
\newcommand{\defn} {\noindent \textbf{Definition.  }}
\newcommand{\defns} {\noindent \textbf{Definitions.  }}
\newcommand{\x}{{\bf x}}
\newcommand{\e}{\epsilon}
\renewcommand{\d}{\delta}
\newcommand{\z}{{\bf z}}
\newcommand{\B}{{\bf b}}
\newcommand{\V}{{\bf v}}
\newcommand{\T}{\mathbb{T}}
\newcommand{\Z}{\mathbb{Z}}
\newcommand{\Hp}{\mathbb{H}}
\newcommand{\D}{\Delta}
\newcommand{\R}{\mathbb{R}}
\newcommand{\N}{\mathbb{N}}
\renewcommand{\B}{\mathbb{B}}
\renewcommand{\S}{\mathbb{S}}
\newcommand{\C}{\mathbb{C}}
\newcommand{\ft}{\widetilde{f}}
\newcommand{\rt}{\widetilde{\rho}}
\newcommand{\dt}{{\mathrm{det }\;}}
 \newcommand{\adj}{{\mathrm{adj}\;}}
 \newcommand{\0}{{\bf O}}
 \newcommand{\av}{\arrowvert}
 \newcommand{\zbar}{\overline{z}}
 \newcommand{\xbar}{\overline{X}}
 \newcommand{\htt}{\widetilde{h}}
\newcommand{\ty}{\mathcal{T}}
\newcommand\diam{\operatorname{diam}}
\renewcommand\Re{\operatorname{Re}}
\renewcommand\Im{\operatorname{Im}}
\newcommand{\tr}{\operatorname{Tr}}
\renewcommand{\skew}{\operatorname{skew}}
\newcommand{\vol}{\operatorname{vol}}

\newcommand{\ds}{\displaystyle}
\numberwithin{equation}{section}
%
%
%
%
\newcommand{\M}{{\mathcal{M}}}
\newcommand{\tef}{transcendental entire function}
\newcommand{\qfor}{\quad\text{for }}
\newcommand*{\defeq}{\mathrel{\vcenter{\baselineskip0.5ex \lineskiplimit0pt
 \hbox{\scriptsize.}\hbox{\scriptsize.}}}
 =}
%
%

\renewcommand{\theenumi}{(\roman{enumi})}
\renewcommand{\labelenumi}{\theenumi}

\newcommand{\alastair}[1]{{\scriptsize \color{red}\textbf{Alastair's note:} #1 \color{black}\normalsize}}

\title[On the mean radius]{On the mean radius of quasiconformal mappings}

\author{Alastair N. Fletcher}

\author{Jacob Pratscher}


\date{\today}

\begin{abstract}
We study the mean radius growth function for quasiconformal mappings. We give a new sub-class of quasiconformal mappings in $\R^n$, for $n\geq 2$, called bounded integrable parameterization mappings, or BIP maps for short. These have the property that the restriction of the Zorich transform to each slice has uniformly bounded derivative in $L^{n/(n-1)}$. For BIP maps, the logarithmic transform of the mean radius function is bi-Lipschitz. We then apply our result to BIP maps with simple infinitesimal spaces to show that the asymptotic representation is indeed quasiconformal by showing that its Zorich transform is a bi-Lipschitz map.
\end{abstract}

\maketitle

\section{Introduction}

The growth of quasiconformal mappings is a topic that has been well studied. If we set
\[ L_f(x_0,r) = \max _{|x-x_0| = r} |f(x) - f(x_0) | \]
and
\[ \ell_f(x_0,r) = \min_{|x-x_0| = r} |f(x) - f(x_0)|,\]
then it is well-known that for quasiconformal mappings we have
\[ \limsup_{r\to 0} \frac{ L_f(x_0,r) }{\ell_f(x_0,r) } \leq H \]
for some $H\geq 1$. On the other hand, for a fixed $x_0$, the (real) functions $L_f$ and $\ell_f$ are bounded above and below via H\"older-type inequalities. 
The local distortion result given by \cite[Theorem III.4.7]{Rickman} shows that if $f$ is $K$-quasiconformal then there exist positive constants $A,B$ and and $r_0>0$ so that
\[ A r^{K^{1/(n-1)}} \leq \ell_f(x_0,r) \leq L_f(x_0,r) \leq B r^{K^{1/(1-n)}} \]
for $0<r<r_0$.

Another quantity that is comparable to $L_f$ and $\ell_f$ on small scales is the mean radius function that, as far as the authors are aware, was introduced into the study of quasiregular mappings by Gutlyanskii et al in \cite{GMRV}. We recall this notion here.
Let $n\geq 2$, suppose that $U\subset \R^n$ is a domain and $f:U \to \R^n$ is a non-constant quasiregular mapping. Let $x_0 \in U$ and $r_0 = \operatorname{dist}(x_0, \partial U)$. For $0<r<r_0$, the {\it mean radius} of the image of the ball $B(x_0,r) = \{ x\in \R^n : |x-x_0| <r \}$ under $f$ is defined by
\[ \rho _f(x_0,r) = \left ( \frac{ \vol_n f(B(x_0 , r) )  }{ \Omega_n} \right )^{1/n}, \]
where $\vol_n E$ denotes the $n$-dimensional Lebesgue measure of $E$ and $\Omega_n$ denotes the volume of the unit ball $\B^n = B(0,1) \subset \R^n$.

Since
\[ \Omega_n \ell_f(x_0,r) \leq \vol_n f(B(x_0,r) ) \leq \Omega_n L_f(x_0,r),\]
it is clear that
\[ \ell_f(x_0,r) \leq \rho_f(x_0,r) \leq L_f(x_0,r).\]
Henceforth, we will assume that $x_0=0$. This makes no difference to our discussion.

The mean radius was used by Gutlyanskii et al \cite{GMRV} to construct generalized derivatives of quasiregular mappings, a topic which has been studied more recently in papers of the first named author and Wallis \cite{FW} and the authors \cite{FP}. In this paper, we will study the regularity properties of $\rho_f$ itself. It is clear from the definition that $\rho_f$ is increasing and, moreover, it was shown in the proof of \cite[Lemma 3.1]{FW} that $\rho_f$ is continuous as a consequence of the Lusin (N) condition. It follows from a standard result in analysis that, as a continuous monotonic real function, $\rho_f$ is differentiable almost everywhere. 

It will be convenient for our results to use the logarithmic transform of $\rho_f$. Suppose $\rho_f :(0,r_0) \to (0,\infty)$. Then the {\it logarithmic transform} of $\rho_f$ is defined by the function $\widetilde{\rho_f} : (-\infty , \ln r_0) \to \R$ given by
\[ \widetilde{\rho_f} (t) = \ln \rho_f (e^t).\]
We will typically use the $r$-variable for real functions, and the $t$ variable for logarithmic transforms.

As a generalization of the logarithmic transform to quasiregular maps, we will also use the Zorich transform introduced by the authors in \cite{FP}. This will be described in further detail below, but for a quasiconformal map $f$ fixing $0$, this is defined via a particular Zorich map $\mathcal{Z}$ and the functional equation $\mathcal{Z} \circ \widetilde{f} = f\circ \mathcal{Z}$. The domain of definition of $\widetilde{f}$ is a half-beam $B_M = Q \times (-\infty, M)$, where $\overline{Q}$ is a cuboid in $\R^{n-1}$ of the form $[0,1]^{n-2} \times [0,2]$.

To illustrate why the logarithmic transform is a natural object to study in the growth of quasiconformal mappings, consider the following example.

\begin{example}
\label{ex:1}
Let $f:\C \to \C$ be a radial quasiconformal mapping of the form $f(re^{i\theta}) = h(r) e^{i\theta}$ in polar coordinates, where $h:[0,\infty) \to [0,\infty )$ is a strictly increasing homeomorphism satisfying $h(0)=0$. Then via a direct computation we see that its complex dilatation is
\[ \mu_f = \frac{ f_r + \frac{i}{r} f_{\theta} }{ f_r - \frac{i}{r} f_{\theta} } = \frac{ \frac{rh'(r) }{h(r)} - 1 }{ \frac{rh'(r) }{h(r)} + 1 },\]
wherever $h'(r)$ exists. Since $h$ is an increasing homeomorphism, it is differentiable almost everywhere, and so $\mu_f$ has this expression for almost every $r$ value. Letting $\widetilde{h}$ be the logarithmic transform of $h$, and as
\[ \widetilde{h}'(t) = \frac{e^t h'(e^t) }{h(e^t)},\]
it is then clear that for $f$ to be quasiconformal, it is necessary that there exist constants $0<C_1<C_2 <\infty$ so that
\[ C_1 \leq \widetilde{h} '(t) \leq C_2\]
for all $t$ where $\widetilde{h}'$ is defined.
\end{example}

Radial maps are the nicest class of mappings from our viewpoint, since $\ell_f, L_f$ and $\rho_f$ will all coincide. In general, we expect other factors to play a role in the behaviour of $\rho_f$.
To that end, we build towards the key definition of our paper.

Let $n\geq 2$. For vectors $v_1,\ldots, v_{n-1}$ in $\R^n$, form the $n$ by $n$ matrix $A$ where the top row consists of the unit vectors $e_1,\ldots, e_n$ in $\R^n$, and the $i$th row of $A$ consists of entries from the vector $v_{i-1}$, for $2\leq i \leq n$. Then denote by $\Pi (v_1,\ldots, v_{n-1} )$ the vector in $\R^n$ given by $\det A$.

Returning to the Zorich transform, $\ft$ maps each slice $Q\times \{ t \}$, for $-\infty <t < M$, onto a hypersurface in the beam $B$. The map $\ft$ provides a parameterization for each image hypersurface through $Q$. Recalling that 
\[ \overline{Q} = \{ (x_1,\ldots, x_{n-1}) : 0\leq x_1,\ldots, x_{n-2} \leq 1, 0\leq x_{n-1} \leq 2 \},\]
for $-\infty <t<M$, write $\gamma :Q \to \ft (Q\times \{ t \} )$ so that 
\[ \gamma(x_1,\ldots, x_{n-1}) = \ft ( x_1,\ldots, x_{n-1}, t).\]
If $\gamma(Q)$ has a well-defined $(n-1)$-dimensional volume in $B$ (noting that this is not necessarily the case for an arbitrary quasiconformal Zorich transform $\ft$), then it is given by
\[ \vol_{n-1} \gamma(Q) = \int_Q \left | \left | \Pi \left ( \frac{ \partial \gamma}{\partial x_1 } , \ldots, \frac{ \partial \gamma}{\partial x_{n-1} } \right ) \right | \right | _2 dV ,\]
where $dV$ denotes the volume element on $Q$, and $||.||_2$ denotes the usual $2$-norm.

\begin{definition}
\label{def:main}
Let $n\geq 2$, let $e^M>0$ and let $f:B(0,e^M) \to \R^n$ be quasiconformal with $f(0) = 0$. We say that $f$ h is a {\it bounded integrable parameterization map}, or is a {\it BIP map}, if there exists $P>0$ such that for every $t<M$, the parameterization $\gamma :Q \to \ft ( Q \times \{t \})$ via $\ft$ satisfies
\[ \int_Q \left | \left | \Pi \left ( \frac{ \partial \gamma}{\partial x_1 } , \ldots, \frac{ \partial \gamma}{\partial x_{n-1} } \right ) \right | \right | ^{n/(n-1)}_2 dV \leq P .\]
\end{definition}

Since $n/(n-1) >1$, it follows that the BIP condition implies the images of the various slices $\ft (Q \times \{ t \} )$ have finite $(n-1)$-dimensional volume, but it also implies stronger regularity of the parameterizations.

For our main result, we show that BIP mappings have improved regularity for $\widetilde{\rho_f}$ than just almost everywhere differentiability.

\begin{theorem}
\label{thm:1}
Let $n\geq 2$, $K\geq 1$,  $e^M>0$ and let $f:B(0,e^M) \to \R^n$ be a BIP $K$-quasiconformal map with $f(0) = 0$. Then there exists $L$ depending only on $n$, $K$ and $P$ such that $\widetilde{\rho_f}$ is $L$-bi-Lipschitz on $(-\infty, M)$.
\end{theorem}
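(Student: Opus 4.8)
\medskip
\noindent\textbf{Proof strategy.} The plan is to reduce the bi-Lipschitz assertion to a two-sided pointwise bound on $\widetilde{\rho_f}'$ and to obtain the two sides by different arguments; the BIP hypothesis will enter both through a uniform bound on $L_f/\ell_f$ and through the upper estimate. First I would set $V(r)=\vol_n f(B(0,r))$, so $\rho_f(r)^n=V(r)/\Omega_n$ and $\widetilde{\rho_f}(t)=\tfrac1n\ln\big(V(e^t)/\Omega_n\big)$. Since $f$ is quasiconformal it satisfies Lusin's condition $(N)$ and the area formula, so $V(r)=\int_{B(0,r)}|J_f|$; as $|J_f|\in L^1_{\mathrm{loc}}$, Fubini in polar coordinates shows $V$ is locally absolutely continuous on $(0,e^M)$ with $V'(r)=\int_{\mathbb S^{n-1}(0,r)}|J_f|\,d\mathcal H^{n-1}$ for a.e.\ $r$, and $V>0$ because $f$ is open. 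Hence $\widetilde{\rho_f}$ is increasing and locally absolutely continuous with $\widetilde{\rho_f}'(t)=e^tV'(e^t)/(n\,V(e^t))$ a.e., so it suffices to find $c_1,c_2>0$ depending only on $n,K,P$ with $c_1V(r)\le rV'(r)\le c_2V(r)$ for a.e.\ $r$; this makes $\widetilde{\rho_f}$ $L$-bi-Lipschitz with $L=\max(n/c_1,c_2/n)$ (enlarging to ensure $L\ge 1$).

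The key preliminary step is to show that the BIP condition forces $L_f(0,r)/\ell_f(0,r)\le H$ for all $r\in(0,e^M)$, with $H=H(n,K,P)$; this is exactly what rules out Koebe-type behaviour, for which $\widetilde{\rho_f}$ is unbounded near the boundary and which fails BIP. Here I would use that the Zorich transform $\ft$ is $K'$-quasiconformal on $B_M$ with $K'=K'(n,K)$, and, foliating $B_M$ by the slices $Q\times\{t\}$, write the Jacobian of $\ft$ at a slice point as the $(n-1)$-Jacobian $J_\gamma=\|\Pi(\partial\gamma/\partial x_1,\dots,\partial\gamma/\partial x_{n-1})\|_2$ of the slice parameterization $\gamma(x')=\ft(x',t)$ times a normal speed. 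Quasiconformality and this factorization give $|D\ft|^n\lesssim J_\gamma^{n/(n-1)}$ a.e.\ with implied constant depending on $n,K$. Since $|D\gamma|\le|D\ft|$ on the slice, the BIP bound yields $\int_Q|D\gamma|^n\,dV\lesssim P$ for (a.e.) $t$, so $\gamma\in W^{1,n}(Q,\R^n)$ uniformly in $t$. As $Q\subset\R^{n-1}$ is a fixed bounded convex set and $n>n-1$, Morrey's inequality bounds $\operatorname{osc}_Q\gamma$ by $C(n)\,\|D\gamma\|_{L^n(Q)}\lesssim P^{1/n}$. Finally the last coordinate of $\gamma(x')$ equals $\ln|\mathcal Z(\ft(x',t))|=\ln|f(\mathcal Z(x',t))|$, and as $x'$ runs over $Q$ the point $\mathcal Z(x',t)$ fills $\mathbb S^{n-1}(0,e^t)$ up to a null set, so $\ln\big(L_f(0,e^t)/\ell_f(0,e^t)\big)=\operatorname{osc}_Q[\gamma]_n\le\operatorname{osc}_Q\gamma\lesssim P^{1/n}$, which gives the claimed $H$ (using continuity of $\ell_f,L_f$ in $r$).

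For the lower bound $rV'(r)\gtrsim V(r)$ I would argue geometrically. The image $f(\mathbb S^{n-1}(0,r))$ is a topological sphere enclosing $B(0,\ell_f(0,r))$, so the $1$-Lipschitz nearest-point retraction onto $\overline{B(0,\ell_f(0,r))}$ carries it onto $\mathbb S^{n-1}(0,\ell_f(0,r))$, whence $\mathcal H^{n-1}\big(f(\mathbb S^{n-1}(0,r))\big)\ge n\Omega_n\ell_f(0,r)^{n-1}$. Writing this via the area formula as $\int_{\mathbb S^{n-1}(0,r)}J\,d\mathcal H^{n-1}$ with $J$ the tangential $(n-1)$-Jacobian of $f$, using $J\le|Df|^{n-1}$ and applying H\"older gives $\int_{\mathbb S^{n-1}(0,r)}|Df|^n\,d\mathcal H^{n-1}\ge n\Omega_n\ell_f(0,r)^n/r$. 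Since $|J_f|\ge K^{-1}|Df|^n$, this and the bound on $L_f/\ell_f$ give
\[
rV'(r)=r\!\int_{\mathbb S^{n-1}(0,r)}\!\!|J_f|\,d\mathcal H^{n-1}\ \ge\ n\Omega_nK^{-1}\ell_f(0,r)^n\ \ge\ nK^{-1}H^{-n}\,V(r),
\]
using $V(r)\le\Omega_nL_f(0,r)^n$. For the upper bound $rV'(r)\lesssim V(r)$ I would pass through the Zorich transform: with $|J_{\mathcal Z}(z',z_n)|=e^{nz_n}j_0(z')$ ($j_0$ bounded above and bounded away from $0$ off a null set), changing variables on $\mathbb S^{n-1}(0,e^t)$ gives $rV'(r)=e^{nt}\int_Q|J_f(\mathcal Z(x',t))|\,j_0(x')\,dx'$; the chain rule for $\mathcal Z\circ\ft=f\circ\mathcal Z$ gives $|J_f(\mathcal Z(w))|=|J_{\mathcal Z}(\ft(w))|\,|J_{\ft}(w)|/|J_{\mathcal Z}(w)|$, and since $e^{[\ft(w)]_n}=|f(\mathcal Z(w))|$ and, as above, $|J_{\ft}|\asymp J_\gamma^{n/(n-1)}$, this becomes, a.e.\ in $t$,
\[
rV'(r)\ \asymp\ \int_Q|f(\mathcal Z(x',t))|^n\,J_\gamma(x')^{n/(n-1)}\,dx'\ \le\ L_f(0,r)^n\!\int_Q J_\gamma^{n/(n-1)}\,dV\ \le\ P\,L_f(0,r)^n .
\]
With $V(r)\ge\Omega_n\ell_f(0,r)^n\ge\Omega_nH^{-n}L_f(0,r)^n$ this yields $rV'(r)\lesssim PH^n V(r)$ with implied constant depending on $n,K$. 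Combining the two bounds gives a pinch $c_1\le\widetilde{\rho_f}'\le c_2$ a.e., and integrating it (legitimate since $\widetilde{\rho_f}$ is increasing and locally absolutely continuous) finishes the proof.

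The step I expect to be the main obstacle is the interaction of the BIP hypothesis with the two estimates above: one must recognise that BIP is precisely what controls both the ``angular stretch'' $\ln(L_f/\ell_f)$ — via the Morrey embedding $W^{1,n}(Q)\hookrightarrow C^{0,1/n}$ on the cross-section and the quasiconformal identity $|D\ft|^n\asymp|J_{\ft}|\asymp J_\gamma^{n/(n-1)}$ — and the spherical mass $\int_{\mathbb S^{n-1}(0,r)}|J_f|$, which the Jacobian bookkeeping through $\mathcal Z$ converts to a constant multiple of $L_f(0,r)^n$. That bookkeeping itself requires some care at the branch set of the (non-injective) Zorich map, but this set is null and does not affect any of the integrals.
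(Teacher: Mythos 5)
Your argument is correct in its essentials, but it is a genuinely different route from the one in the paper, so a comparison is worth recording.

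The paper never differentiates the volume function. Instead it first upgrades the quasiconformality of the Zorich transform $\ft$ to quasisymmetry on $B_M$ (Proposition~\ref{prop:1}), then compares the increment $\widetilde{\rho_f}(t_0+t)-\widetilde{\rho_f}(t_0)$ with $V_t=\vol_n\ft(Q\times[t_0,t_0+t])$ (Lemma~\ref{lem:4}), and estimates $V_t$ by decomposing the slab into $N\approx t^{1-n}$ boxes $P_i$ of side $\approx t$. Quasisymmetry controls the shape of each $\ft(P_i)$ (Lemmas~\ref{lem:qsp}--\ref{lem:3}); the lower bound comes from the $\ell^p$-inequality \eqref{eq:lp} plus the topological fact that the projections of $\ft(P_i)$ cover $Q$, and the upper bound comes from recognising $\sum_i(\vol_{n-1}\ft(Q_i))^{n/(n-1)}$ as a Riemann sum for the BIP integral. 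This works entirely with difference quotients, so it needs no absolute continuity of $V$ and no area formula on spheres.

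You instead pass to the density $V'(r)=\int_{S(r)}|J_f|\,d\mathcal H^{n-1}$ (a.e.\ $r$), prove $c_1\le rV'(r)/V(r)\le c_2$, and integrate using local absolute continuity of $V$. Your lower bound comes from a retraction/area-formula argument on a.e.\ sphere; your upper bound comes from the chain rule for $\mathcal Z\circ\ft=f\circ\mathcal Z$ together with the singular-value pinching $J_{\ft}\asymp J_\gamma^{n/(n-1)}$ (with constants in $n,K$), after which the BIP integral appears directly. Both bounds run through a preliminary step that the paper does not make explicit: BIP forces $\log\bigl(L_f(0,r)/\ell_f(0,r)\bigr)\le C(n,K)\,P^{1/n}$ uniformly in $r$, which you obtain by observing $\int_Q|D\gamma|^n\lesssim\int_Q J_\gamma^{n/(n-1)}\le P$ and applying Morrey's embedding $W^{1,n}(Q)\hookrightarrow C^{0,1/n}$ on the $(n-1)$-dimensional cross-section. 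This uniform linear-distortion bound is an attractive by-product of your approach, and it makes the upper bound conceptually transparent. A trade-off is that the paper's lower bound uses only quasiconformality (it is remarked after Theorem~\ref{thm:1} that the lower bound holds without BIP), whereas your lower bound invokes $L_f/\ell_f\le H$ and hence, via your Morrey step, BIP; this is harmless under the stated hypotheses but slightly less information.

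Two small points you should tighten if you write this up. First, the factorisation $|D\ft|^n\asymp J_{\ft}\asymp J_\gamma^{n/(n-1)}$, Morrey's estimate, and $V'(r)=\int_{S(r)}|J_f|$ all hold only for a.e.\ slice/sphere; you rely on the continuity of $r\mapsto L_f,\ell_f$ to promote the $L_f/\ell_f$ bound from a.e.\ to all $r$, and on absolute continuity of $V$ to promote the a.e.\ derivative bound to a genuine bi-Lipschitz estimate, so both promotions should be stated. Second, in the Jacobian bookkeeping your displayed $\asymp$ should really be $\lesssim$: the full Jacobian $J_{\mathcal Z}(z',z_n)=e^{nz_n}j_0(z')$ is not bounded below near the branch set of $\mathcal Z$. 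Fortunately, when you change variables the factor $j_0(x')$ in the denominator coming from $J_{\mathcal Z}(x',t)$ cancels against the $(n-1)$-Jacobian of $\mathcal Z|_{Q\times\{t\}}$ (they are equal because the $t$-direction maps to the unit outward normal), leaving only $j_0(\mathcal P(\ft(x',t)))$, which is bounded above; so the one-sided bound you actually use is correct, but the claimed two-sided comparability is not.
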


It follows from Theorem \ref{thm:1} that 
\[ \frac{1}{L} \leq \widetilde{\rho_f}'(t) \leq L \]
for every $t\in (-\infty, M)$ at which $\widetilde{\rho_f}$ is differentiable, recalling Example \ref{ex:1}.
As will be evident from the proof of Theorem \ref{thm:1}, the implied lower bound in the bi-Lipschitz behaviour of $\widetilde{\rho_f}$ always holds. The requirement of BIP maps is needed for the upper bound. 

Theorem \ref{thm:1} may be applied in neighbourhoods of a fixed point of a quasiregular mapping, provided that the local index $i(x_0,f) =1$, that is, that $f$ is injective in a neighbourhood of $x_0$. If the fixed point of $f$ is geometrically attracting, then by definition there exist $c>1$ and $r_1 > 0$ such that $|f(x)| < c|x|$ for $|x| < r_1$. It is worth pointing out that although $\rho_f (r) <c r$, it does not follow that $\widetilde{\rho_f} '(t) <1$: consider the simple example $f(x) = x/2$ for which $\widetilde{\rho_f}(t) = t - \ln 2$.

The idea behind the proof of Theorem \ref{thm:1} is as follows. We first need to show that the Zorich transform $\ft$ is not just quasiconformal in $B_M$, but in fact quasisymmetric. This allows us to compare quantities of the form $(\widetilde{\rho_f}(t_0+t) - \widetilde{\rho_f}(t_0) )/t$ to $n$-dimensional volumes of images of slices $Q \times [t_0,t]$ under $\ft$. Subdividing these slices into small cuboids and using the quasisymmetry of $\ft$ allows us to compare the volume of the image of a cuboid to its diameter raised to the $n$'th power. Summing over all such cuboids and a judicious use of an $\ell^p$ inequality yields the lower bound. The upper bound arises by using the sum over all cuboids as an approximation for the expression in Definition \ref{def:main}. The BIP condition is then needed to provide the upper bound.

We record the fact that the BIP condition is necessary for Theorem \ref{thm:1} in the following proposition.

\begin{proposition}
\label{prop:0}
Suppose that $e^M >0$ and let $f:B(0,e^M) \to \R^2$ be quasiconformal, fix $0$ and so that for some $0<e^{t_0}<e^M$, the image of the circle centred at $0$ of radius $e^{t_0}$ under $f$ is a non-rectifiable curve. Then $\widetilde{\rho_f}$ is not bi-Lipschitz at $t_0$.
\end{proposition}

It is well-known that the image of a circle under a quasiconformal map need not be rectifiable, for example, it could be a snowflake curve in $\R^2$ or a snowball in $\R^3$. We refer to \cite{Meyer} for snowballs and references in the literature addressing snowflake curves.

As an application of Theorem \ref{thm:1}, we turn to infinitesimal spaces. Recall from \cite{GMRV} that a generalized derivative of a quasiregular map $f:U\to \R^n$ at $x_0 \in U$ is defined to be any local uniform limit of 
\[ \frac{ f(x_0 + r_k x) - f(x_0) }{ \rho_f(r_k) } \]
as $r_k \to 0$. Of course, not every such sequence need have a limit, but the quasiregular version of Montel's Theorem implies there will be a subsequence along which there is local uniform convergence. See \cite[p.103]{GMRV} for a discussion of this point. The collection of generalized derivatives of $f$ at $x_0$ is called the {\it infinitesimal space} $T(x_0,f)$. 

In the special case where $T(x_0,f)$ consists of only one mapping $g$, then $f$ is called {\it simple} at $x_0$. For example, if $f$ is differentiable at $x_0$, then $f$ is simple at $x_0$. If $f$ is simple at $x_0$, then $f$ has an asymptotic representation analogous to a first degree Taylor polynomial approximation of an analytic function. As usual, we suppose $x_0 = f(x_0) = 0$. Then \cite[Proposition 4.7]{GMRV} states that as $x\to 0$, we have
\[ f(x) \sim D(x) := \rho_f(|x|) g(x/ |x| ),\]
where $p(x) \sim q(x)$ as $x\to 0$ means
\[ |p(x) - q(x) | = o( |p(x)| + |q(x) | ).\]
The map $D$ is called the {\it asymptotic representative} of $f$ at (in this case) $0$.
Our second main result concerns the asymptotic representative.

\begin{theorem}
\label{thm:2}
Let $n\geq 2$, $U\subset \R^n$ be a domain and
suppose that $f:U \to \R^n$ is a quasiregular map, $0 \in U$, $i(0,f) = 1$ and $f$ is simple at $0$ and $f$ is a BIP quasiconformal map on some neighbourhood of $0$. Then there exists $M\in \R$ such that $\widetilde{D}$ is bi-Lipschitz in $B_M$. Moreover, $D$ is quasiconformal in $B(x_0,e^M)$.
\end{theorem}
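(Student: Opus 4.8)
The goal is to prove Theorem \ref{thm:2}, which says that if $f$ is simple at $0$ with $i(0,f)=1$ and $f$ is a BIP quasiconformal map near $0$, then the asymptotic representative $D(x) = \rho_f(|x|) g(x/|x|)$ has bi-Lipschitz logarithmic/Zorich transform $\widetilde{D}$ on some half-beam $B_M$, and consequently $D$ is quasiconformal.

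My plan: first I would unwind the definition of $D$ through the Zorich transform. Since $D(x) = \rho_f(|x|) g(x/|x|)$ has a product structure separating the radial part $\rho_f(|x|)$ from the ``spherical'' part $g(x/|x|)$, its Zorich transform should decompose accordingly: writing a point in the half-beam $B_M$ as $(y,t)$ with $y \in Q$ and $t < M$, I expect $\widetilde{D}(y,t) = \widetilde{\rho_f}(t) \cdot (\text{something}) + (\text{a slice-parameterization built from } g)$, i.e. the $t$-coordinate of $\widetilde{D}$ is governed by $\widetilde{\rho_f}$ and the $Q$-coordinates are governed by a fixed parameterization $\gamma_g : Q \to$ (image sphere under $g$) that is independent of $t$. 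Concretely, because $g$ is itself the (unique) generalized derivative, $g$ is $K$-quasiconformal and its Zorich transform's restriction to each slice is the same map up to the radial scaling; the BIP hypothesis on $f$ transfers to $D$ (or directly to $g$) because the slice parameterizations of $\widetilde{f}$ converge to those of $\widetilde{D}$ and the $L^{n/(n-1)}$ bound $P$ is preserved in the limit by Fatou. So $D$ is a BIP $K$-quasiconformal map on $B(0,e^M)$ for suitable $M$.

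Then I would apply Theorem \ref{thm:1} to $D$ directly: since $D$ is BIP and $K$-quasiconformal with $D(0)=0$, there is $L = L(n,K,P)$ so that $\widetilde{\rho_D}$ is $L$-bi-Lipschitz on $(-\infty, M)$. But $\rho_D = \rho_f$ (the asymptotic representative has, essentially by construction in \cite{GMRV}, the same mean radius function, or at least a comparable one — I'd need to check $\rho_D(r) = \rho_f(r)$ exactly, which should follow since $\vol_n D(B(0,r)) = \rho_f(r)^n \vol_n g(\B^n) / \ldots$; more carefully, since $g$ sends the unit ball to a domain of volume $\Omega_n$ as $g$ is itself normalized so that $\rho_g(1) = 1$, we get $\rho_D(r) = \rho_f(r)$). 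Hence $\widetilde{\rho_f} = \widetilde{\rho_D}$ is $L$-bi-Lipschitz. Now to conclude that $\widetilde{D}$ itself is bi-Lipschitz on $B_M$ (not just its radial profile), I would use the explicit product-type formula for $\widetilde{D}$: the $Q$-variables are handled by the fixed bi-Lipschitz (indeed quasisymmetric, by the argument sketched for Theorem \ref{thm:1} applied to $g$) slice parameterization $\gamma_g$, and the $t$-variable is handled by the bi-Lipschitz function $\widetilde{\rho_f}$; a map that is bi-Lipschitz in each variable with this kind of skew-product structure is bi-Lipschitz, with constant depending only on $n, K, P$.

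Finally, $D$ quasiconformal on $B(0,e^M)$ follows from $\widetilde{D}$ bi-Lipschitz on $B_M$: a bi-Lipschitz map is quasiconformal, and conjugating back by the Zorich map $\mathcal{Z}$ (which is itself quasiregular, and locally quasiconformal away from the branch set) via the functional equation $\mathcal{Z} \circ \widetilde{D} = D \circ \mathcal{Z}$ shows $D$ is quasiconformal, since the composition of quasiregular maps is quasiregular and injectivity of $D$ (from $i(0,f)=1$, so $g$ and hence $D$ is injective) upgrades this to quasiconformal.

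The main obstacle I anticipate is making rigorous the claim that the BIP property passes from $f$ to its asymptotic representative $D$ (equivalently to $g$): this requires controlling the slice parameterizations $\gamma_t$ of $\widetilde{f}$ uniformly as $t \to -\infty$ and identifying their limit with the slice parameterization coming from $g$, then invoking lower semicontinuity of the $L^{n/(n-1)}$-norm of the generalized Jacobian-type quantity $\|\Pi(\partial_1\gamma, \ldots, \partial_{n-1}\gamma)\|_2$ under the relevant (local uniform, plus Sobolev-weak) convergence. Establishing that this convergence is strong enough — likely weak convergence of derivatives in $L^{n/(n-1)}$, which follows from the uniform BIP bound plus the convergence $f(r_k x)/\rho_f(r_k) \to g(x)$ — together with Mazur's lemma or direct lower semicontinuity, is the technical heart of the argument. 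A secondary subtlety is verifying $\rho_D = \rho_f$ (or at worst that $\widetilde{\rho_D} - \widetilde{\rho_f}$ is bounded, which would still give the bi-Lipschitz conclusion for $\widetilde{D}$).
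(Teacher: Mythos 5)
Your product-structure intuition for $\widetilde{D}$ is essentially right, and the identity $\rho_D = \rho_f$ you compute is correct. However, the step you flag as the ``technical heart'' of your argument --- transferring the BIP property from $f$ to its asymptotic representative $D$ (or to $g$) via lower semicontinuity of the $L^{n/(n-1)}$ norm --- is entirely unnecessary, and this is the main misdirection in the plan. Since $D(x) = \rho_f(|x|)\,g(x/|x|)$, the function appearing in $\widetilde{D}(x) = \widetilde{g}(\mathcal{P}(x)) + (0,\ldots,0,\widetilde{\rho_f}([x]_n))$ is $\widetilde{\rho_f}$, not $\widetilde{\rho_D}$, and $f$ is already BIP \emph{by hypothesis}, so Theorem \ref{thm:1} applies directly to $f$ and gives exactly the bi-Lipschitz bound on $\widetilde{\rho_f}$ that you need. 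You yourself note $\rho_D = \rho_f$ in order to pass back from $\widetilde{\rho_D}$ to $\widetilde{\rho_f}$; this should have signaled that applying Theorem \ref{thm:1} to $D$ was a detour. The proposed Fatou/weak-convergence argument is not carried out in the paper and would require real work, since the integrand in the BIP definition involves the $(n-1)$-vector of minors of the slice derivative, whose lower semicontinuity under local uniform convergence of the maps is not automatic.

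Relatedly, your treatment of the spherical factor is too vague and partly circular. You appeal to ``the argument sketched for Theorem \ref{thm:1} applied to $g$'' to get quasisymmetry and then bi-Lipschitzness of the slice parameterization, but Theorem \ref{thm:1} concerns the mean radius, not slice regularity, and quasisymmetry alone does not give a bi-Lipschitz bound; to invoke it you would again need $g$ to be BIP, reintroducing the difficulty you were trying to sidestep. The paper instead uses \cite[Proposition 4.18]{GMRV}: $g(x) = |x|^d g(x/|x|)$ with $g|_{S^{n-1}}$ BLD, which together with the injectivity forced by $i(0,f)=1$ makes $g|_{S^{n-1}}$ bi-Lipschitz, hence $\widetilde{g}|_{Q\times\{0\}}$ bi-Lipschitz, hence (via the explicit $d$-homogeneous form of $\widetilde{g}$ and bounded linear distortion) $\widetilde{g}$ bi-Lipschitz on all of $B$. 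The final assembly is also made precise in the paper: $\widetilde{g}^{-1}\circ\widetilde{D}$ is the shear $(x_1,\ldots,x_n)\mapsto(x_1,\ldots,x_{n-1},\widetilde{\rho_f}(x_n)/d)$, which is bi-Lipschitz because $\widetilde{\rho_f}$ is; composing with the bi-Lipschitz $\widetilde{g}$ gives $\widetilde{D}$ bi-Lipschitz. A generic claim that ``bi-Lipschitz in each variable with this kind of skew-product structure is bi-Lipschitz'' is not true without such explicit structure and would need justification.
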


The simplicity assumption on $f$ implies via \cite[Proposition 4.7]{GMRV} that $\rho_f$ is asymptotically $d$-homogeneous, that is, there exists $d>0$ such that for any $s>0$,
\[ \rho_f( st) \sim s^d\rho_f(t) \]
as $t\to 0$. It should be noted that this conclusion does not imply that $\rho_f$ itself is $d$-homogeneous: one can check that if $d>0$ then the two dimensional radial map given in polar coordinates by
\[ (r,\theta ) \mapsto \left ( \frac{r^d}{\log (1/r)} , \theta \right )\]
is quasiconformal, but $\rho_f$ is not $d$-homogeneous.

The assumption that $f$ is BIP in a neighbourhood of $0$ is necessary in Theorem \ref{thm:2}. We illustrate this by means of an example.

\begin{example}
We work in dimension two.
There exists a quasiconformal map $h$ from the square $[-1,1]^2$ to itself which is the identity on the boundary and so that $h$ maps $[-1,1]\times \{ 0 \}$ onto a non-rectifiable curve.
Next, for $M<0$, let $B_M = [0,2] \times (-\infty, M)$.
For $d\in \Z$ and $d<\min \{M, -2 \}$, consider the square $S_d$ centred at $z_d = (1,d)$ with sides parallel to the coordinate axes and of side-length $|d|^{-1}$. We define $\ft$ to be the identity outside $S_d$ in $B_M$. Let $A_d$ be the linear map
\[ A_d(z) = \frac{z}{2|d|} + z_d\]
and then for $z\in S_d$, we define $\ft(z) = A_d(h(A_d^{-1}(z)))$.

There exists a quasiconformal map $f: B(0,e^M) \to \R^2$ whose Zorich transform is $\ft$. By construction, for $r>0$, $f$ is not the identity in $B(0,r)$ on the union of the sets $\mathcal{Z}(S_d)$, where $d < \log r$. For such $d$, the diameter of $\mathcal{Z}(S_d)$ is bounded above by
\[ e^{d +1/2|d|} -e^{d-1/2|d|} < 2\sinh \left ( \frac{1}{2\ln \frac{1}{r} } \right ) = o(1) \]
as $r\to 0$. We conclude that $f(x) \sim x$ as $x\to 0$ and so $f$ has a simple infinitesimal space consisting of the identity. However, $\widetilde{\rho_f}$ is not bi-Lipschitz on any neighbourhood of $d$ by the argument in Proposition \ref{prop:0}, from which it follows that $D(x) = \rho_f(|x|) x/|x|$ is not quasiconformal.
\end{example}

As a final remark, in \cite{FM}, it was implicitly assumed that the map $D$ is quasiconformal. Since our results above show that this is not always the case, Theorem \ref{thm:2} can be viewed as providing an extra assumption necessary for the results from that paper to hold.

\section{Preliminaries}

Throughout, $B(x_0,r)$ denotes the ball centred at $x_0 \in \R^n$ of radius $r>0$. Its boundary is the sphere $S(x_0,r)$ or, if $x_0=0$, we may simply write $S(r)$. If $x\in \R^n$, then we write $[x]_n \in \R$ for the $n$th coordinate.

We refer to standard references such as \cite{Rickman} for the definition and basic properties of quasiregular and quasiconformal mappings in $\R^n$. 
In particular, if $f$ is quasiregular, then $K_O(f), K_I(f)$ and $K(f)$ refer to the outer dilatation, inner dilatation and maximal dilatation, respectively.
Bounded length distortion maps, BLD for short, are a sub-class of quasiregular maps for which the finite length curves are mapped to curves of finite length, with uniform control on the length distortion. BLD maps are locally bi-Lipschitz. In some sense, BIP maps provide a generalization of BLD maps.

We will use the following result.

\begin{theorem}[Theorem 1.1, \cite{FN}]
\label{thm:fn}
Let $U\subset \R^n$ be a domain for $n\geq 2$ and let $f:U \to \R^n$ be a non-constant quasiregular map. If $x\in U$, there exist $C>1$ and $r_0>0$ such that for all $0<T \leq 1$ and $r\in (0,r_0)$
\[ T^{-\mu} \leq \frac{L_f(x,r)}{\ell_f(x,Tr)} \leq C^2 T^{-\nu}\]
and
\[ \frac{T^{-\mu}}{C^2} \leq \frac{ \ell_f(x,r) }{L_f( x,Tr) } \leq T^{-\nu},\]
where $\mu = (i(x,f) / K_I(f) )^{1/(n-1)}$, $\nu = (K_O(f) i(x,f) )^{1/(n-1)}$ and $C$ depends only on $n$, $K_O(f)$ and the local index $i(x,f)$.
\end{theorem}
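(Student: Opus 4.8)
The plan is to transfer the problem from the round spheres $S(x,r)$ and $S(x,Tr)$ to the normal neighbourhoods of $f$ at $x$, on which $f$ is an honest $i(x,f)$-fold branched cover onto a round ball, and then to compare conformal capacities of ring condensers. Fix $x\in U$ and choose $r_0>0$ small enough that $\overline{B}(x,r_0)\subseteq U$ and, for some $\rho_*>0$, $\overline{B}(x,r_0)$ lies in the normal neighbourhood $U(x,f,\rho_*)$, i.e.\ the $x$-component of $f^{-1}(B(f(x),\rho_*))$; recall from \cite{Rickman} that for $0<\rho\le\rho_*$ the sets $U(x,f,\rho)$ increase with $\rho$, their closures nest compactly, $f\colon U(x,f,\rho)\to B(f(x),\rho)$ is proper of degree $m:=i(x,f)$, and $f^{-1}\bigl(\overline B(f(x),\rho')\bigr)\cap U(x,f,\rho_*)=\overline{U(x,f,\rho')}$ for $\rho'\le\rho_*$. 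Two elementary containments will do most of the work: for $0<r<r_0$,
\[ B(x,r)\subseteq U\bigl(x,f,L_f(x,r)\bigr)\qquad\text{and}\qquad U\bigl(x,f,\ell_f(x,r)\bigr)\subseteq B(x,r). \]
The first holds by the strong maximum principle for quasiregular mappings, which forces $|f-f(x)|<L_f(x,r)$ throughout the open ball $B(x,r)$; the second because a path inside $U(x,f,\ell_f(x,r))$ from $x$ to a point at distance $\ge r$ from $x$ would cross $S(x,r)$ at a point mapped into $B(f(x),\ell_f(x,r))$, against the definition of $\ell_f$. The same maximum principle shows $L_f(x,\cdot)$ is strictly increasing on $(0,r_0)$, and after shrinking $r_0$ we may assume $L_f(x,r),\ell_f(x,r)\le\rho_*$ there.

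I would first prove the two constant-free estimates. Fix $0<T<1$ and $0<r<r_0$. Since $L_f$ is strictly increasing, $\ell_f(x,Tr)\le L_f(x,Tr)<L_f(x,r)$, so putting $\rho:=L_f(x,r)$, $\rho':=\ell_f(x,Tr)$, the pair $\bigl(U(x,f,\rho),\overline{U(x,f,\rho')}\bigr)$ is a condenser carried by the degree-$m$ proper map $f$ onto the round condenser $\bigl(B(f(x),\rho),\overline B(f(x),\rho')\bigr)$, of capacity $\omega_{n-1}(\log(\rho/\rho'))^{1-n}$ with $\omega_{n-1}$ the area of $S^{n-1}$. The standard capacity inequalities for proper quasiregular maps give
\[ \operatorname{cap}\bigl(U(x,f,\rho),\overline{U(x,f,\rho')}\bigr)\ \ge\ \frac{m}{K_I(f)}\,\omega_{n-1}\Bigl(\log\tfrac{\rho}{\rho'}\Bigr)^{1-n}, \]
while the two containments above, passing through the intermediate condenser $\bigl(U(x,f,\rho),\overline B(x,Tr)\bigr)$ and using monotonicity of capacity (it decreases when the ambient open set grows and increases when the compact plate grows), give
\[ \operatorname{cap}\bigl(U(x,f,\rho),\overline{U(x,f,\rho')}\bigr)\ \le\ \operatorname{cap}\bigl(B(x,r),\overline B(x,Tr)\bigr)=\omega_{n-1}\Bigl(\log\tfrac1T\Bigr)^{1-n}. \]
Comparing, cancelling $\omega_{n-1}$ and raising to the negative power $1/(1-n)$ yields $\log(\rho/\rho')\ge\mu\log(1/T)$, i.e.\ $L_f(x,r)/\ell_f(x,Tr)\ge T^{-\mu}$. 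The same scheme with $\rho:=\ell_f(x,r)$, $\rho':=L_f(x,Tr)$ (the case $\ell_f(x,r)\le L_f(x,Tr)$ being trivial), but now using the upper capacity inequality $\operatorname{cap}(\,\cdot\,)\le mK_O(f)\,\omega_{n-1}(\log(\rho/\rho'))^{1-n}$ and the reversed containments $U(x,f,\rho)\subseteq B(x,r)$, $\overline{U(x,f,\rho')}\supseteq\overline B(x,Tr)$, gives $\operatorname{cap}(U(x,f,\rho),\overline{U(x,f,\rho')})\ge\operatorname{cap}(B(x,r),\overline B(x,Tr))$ and hence $\ell_f(x,r)/L_f(x,Tr)\le T^{-\nu}$. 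These are precisely the lower bound of the first inequality and the upper bound of the second.

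The remaining two bounds carry the constant $C$, and for these I would invoke the classical local eccentricity estimate: shrinking $r_0$ once more, there is $H=H(n,K_O(f),i(x,f))\ge1$ with $L_f(x,\sigma)\le H\,\ell_f(x,\sigma)$ for $0<\sigma<r_0$. (This too is a capacity estimate: $\operatorname{cap}\bigl(U(x,f,\rho),\overline{U(x,f,\rho/2)}\bigr)$ is bounded above by the degree-$m$ inequality, while a Teichmüller-type lower bound — exploiting that the inner plate is a continuum through $x$ whose diameter is at least the outer radius of $U(x,f,\rho/2)$ while the outer plate reaches in to the inner radius of $U(x,f,\rho)$ — forces these radii to stay comparable; see \cite{Rickman}.) Taking $C:=\max\{H,1\}$: from $\ell_f(x,r)\le T^{-\nu}L_f(x,Tr)\le HT^{-\nu}\ell_f(x,Tr)$ and $L_f(x,r)\le H\ell_f(x,r)$ one gets $L_f(x,r)/\ell_f(x,Tr)\le C^2T^{-\nu}$, and from $L_f(x,r)/\ell_f(x,Tr)\ge T^{-\mu}$ together with $\ell_f(x,r)\ge L_f(x,r)/H$ and $L_f(x,Tr)\le H\ell_f(x,Tr)$ one gets $\ell_f(x,r)/L_f(x,Tr)\ge T^{-\mu}/C^2$; the case $T=1$ is immediate from the eccentricity bound. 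I expect the main obstacle to be exactly this eccentricity estimate: once the normal-neighbourhood framework and the correct degree-$m$ capacity inequalities are in place — with $i(x,f)$ in the numerator and with $K_I(f)$, $K_O(f)$ producing exactly the exponents $\mu$ and $\nu$ — the rest is essentially bookkeeping, but the eccentricity bound is the one genuinely quantitative input, and controlling its constant purely in terms of $n$, $K_O(f)$ and $i(x,f)$ (and checking that a single $r_0$ can serve all the normal-neighbourhood and eccentricity requirements on $(0,r_0)$) is where the real care is needed.
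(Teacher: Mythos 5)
The paper itself gives no proof of this result: it is imported verbatim from Fletcher--Nicks \cite{FN}, so there is no in-paper argument to compare against. Judged on its own merits, your reconstruction is correct and follows the standard route (normal neighbourhoods plus modulus/capacity estimates, as in Rickman \cite{Rickman}, Ch.~II--III), which is also the route \cite{FN} takes. The two containments $B(x,r)\subseteq U(x,f,L_f(x,r))$ and $U(x,f,\ell_f(x,r))\subseteq B(x,r)$, the placement of $i(x,f)$ and $K_I(f)$ in the lower capacity bound and of $i(x,f)$ and $K_O(f)$ in the upper one, the monotonicity of condenser capacity under growing $A$ or shrinking $C$, and the final algebra with the exponent $1/(1-n)$ all check out and produce exactly $\mu$ and $\nu$ with no loose constants; the remaining two bounds then drop out of the local linear distortion $H=H(n,K_O(f),i(x,f))$ (Rickman Thm.~III.4.4) and the case $T=1$ as you say. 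You have also correctly identified where the real work lies: (a) verifying the multiplicity hypothesis of the $K_I$-inequality on the inner plate $\overline{U(x,f,\rho')}$ (which holds because $f$ is proper of degree $i(x,f)$ on the ambient normal neighbourhood and $f^{-1}(y)\cap U(x,f,\rho_*)\subseteq\overline{U(x,f,\rho')}$ whenever $|y-f(x)|\le\rho'$); (b) the Teichm\"uller-type argument underlying the eccentricity constant and the fact that it depends only on $n$, $K_O(f)$, $i(x,f)$; and (c) choosing a single $r_0$ serving the normal-neighbourhood, capacity and eccentricity requirements simultaneously. None of these is a gap, but they are the steps a referee would want written out.
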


\subsection{The Zorich transform}

The class of Zorich mappings provides a well-known quasiregular generalization of the exponential function in the plane. 
These maps are strongly automorphic with respect to a discrete group $G$ of isometries. This means that if $\mathcal{Z}$ is a Zorich map, then $\mathcal{Z}(g(x)) = \mathcal{Z}(x)$ for all $g\in G $ and all $x\in \R^n$ and, moreover, if $\mathcal{Z}(x) = \mathcal{Z}(y)$ then $y=g(x)$ for some $g\in G$.

For each $n \geq 2$, we will fix one particular Zorich map, which we denote simply by $\mathcal{Z}$, with the following properties. 
\begin{enumerate}[(i)]
\item For each $t \in \R$,  $\mathcal{Z}$ maps the hyperplane $H_t = \{ x\in \R^n : [x]_n =t \}$ onto the sphere $S(e^t)$.\label{spheres}
\item On each hyperplane $H_t$, $\mathcal{Z}$ is locally bi-Lipschitz with isometric distortion scaling with $e^t$. In fact,
\[ \mathcal{Z}(x_1,\ldots, x_{n-1},t) =e^t \mathcal{Z}(z_1,\ldots, x_{n-1},0).\] 
\item The group $G$ is generated by a translation subgroup of rank $n-1$ and a finite group of rotations about the origin such that each $g\in G$ preserves the $n$'th coordinate.
\item A fundamental set $B$ for the action of $G$ is given by a beam $Q \times \R$, where the closure of $Q$ is a cuboid in $\R^{n-1}$ of the form $[0,1]^{n-2} \times [0,2]$ and the sides of this beam are identified via the group $G$.
\end{enumerate}

For an explicit formula for such a Zorich map, together with its associated group $G$, we refer to, for example, \cite[Section 3]{FP}.
If $f$ is quasiconformal in a neighbourhood $U$ of the origin in $\R^n$ and $f(0)=0$, then we may consider the {\it Zorich transform} $\widetilde{f}:\mathcal{Z}^{-1}(U) \cap B \to B$ of $f$. 
This is defined via the relation
\[ f\circ \mathcal{Z} = \mathcal{Z} \circ \widetilde{f}.\]
We often just assume the domain of $\widetilde{f}$ is 
\[ B_M := \{ x\in B : [x]_n <M \} , \] 
in the same way that we may assume that the domain of $f$ is $\{ x : |x| <e^M \}$.  

We will use the fact that $B_M$ can be made into a metric space via the quotient of the Euclidean metric under the group $G$ and consider the appropriate topological notions of balls, convergence and so on in this metric space. In particular, $\mathcal{Z}^{-1} :B_M \to \R^n$ is locally bi-Lipschitz with this metric in the domain and the Euclidean metric in the range.

\subsection{Quasisymmetric maps}

A standard reference for quasisymmetric maps is Heinonen's book \cite{Heinonen}. A map $f:X\to Y$ between metric spaces is called {\it quasisymmetric} if there exists a homeomorphism $\eta :[0,\infty) \to [0,\infty)$ such that for all triples of points $x,a,b \in X$, we have
\[ \frac{ d_Y( f(x), f(a) )}{d_Y( f(x), f(b) )} \leq \eta \left ( \frac{ d_X(x,a) }{d_X(x,b) } \right).\]
If we wish to specify the function $\eta$, then $f$ is said to be {\it $\eta$-quasisymmetric}. A map $f:X\to Y$ is called {\it weakly quasisymmetric} if there exists a constant $H\geq 1$ such that $d_X(x,a) \leq d_X(x,b)$ implies 
\[ d_Y(f(x),f(a)) \leq H d_Y(f(x),f(b) )\]
for all triples of points $x,a,b \in X$. While in general weakly quasisymmetric maps need not be quasisymmetric, they are in connected doubling spaces, see \cite[Theorem 10.19]{Heinonen}.

Quasisymmetry can be viewed as a global version of quasiconformality. In particular, a quasisymmetric map $f:U\to V$ between domains in $\R^n$ is quasiconformal. Conversely, the egg-yolk principle states that a $K$-quasiconformal map $f:U\to V$ is $\eta$-quasisymmetric on each ball $B(x_0 , \operatorname{dist}(x_0 , \partial U) /2 )$, with $\eta$ depending only on $n$ and $K$, see \cite[Theorem 11.14]{Heinonen}.

\subsection{$\ell^p$ estimates}

It is well-known that if $x = (x_1,\ldots, x_N)\in \R^N$ and $x_i \geq 0$ for $i=1,\ldots, N$, then for $1\leq p <q \leq \infty$ we have
\[
||x||_q \leq ||x||_p \leq N^{1/p - 1/q} ||x||_q,
\]
where $||x||_p$ denotes the usual $p$-norm
\[ ||x||_p = \left ( \sum_{i=1}^N x_i^p \right )^{1/p}.\]
In particular, for $n\geq 2$, if $p=n-1 $ and $q=n$, then
\begin{equation}
\label{eq:lp}
\sum_{i=1}^N x_i^n \geq  N^{1/(1-n)} \left ( \sum_{i=1}^N x_i^{n-1} \right ) ^{n/(n-1)}.
\end{equation}

\section{Quasisymmetry}

We will need the following result that shows the Zorich transform of a quasiconformal map is quasisymmetric. This is a non-trivial result, as $\mathcal{Z}^{-1}$ itself is not quasisymmetric on any punctured neighbourhood of the origin.

\begin{proposition}
\label{prop:1}
Let $n\geq 2$, $K>1$ and $r_0 >0$. If $f :B(0,r_0) \to \R^n$ is a $K$-quasiconformal map with $f(0)=0$, then there exists $M < \ln r_0$ such that $\widetilde{f}$ is quasisymmetric in $B_M$.
\end{proposition}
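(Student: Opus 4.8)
The plan is to exploit the egg-yolk principle together with the local bi-Lipschitz behaviour of $\mathcal{Z}$ and $\mathcal{Z}^{-1}$ on individual slices, and then patch these local estimates together using a chaining argument adapted to the half-beam geometry. The key structural fact is property (ii) of the chosen Zorich map: the map $\mathcal{Z}$ restricted to the slice $H_t$ is bi-Lipschitz onto $S(e^t)$ with a distortion constant that scales \emph{linearly} with $e^t$, so that the ``shape'' distortion of $\mathcal{Z}$ on $H_t$ is independent of $t$. Since $f$ is $K$-quasiconformal on $B(0,r_0)$ with $f(0)=0$, the functional equation $f\circ\mathcal{Z} = \mathcal{Z}\circ\widetilde{f}$ forces $\widetilde{f}$ to preserve the $n$th coordinate approximately in the sense that it maps $H_t\cap B$ near a slice $H_{t'}$; more precisely, by the local distortion result (Theorem III.4.7 of Rickman quoted in the introduction) the image under $f$ of the sphere $S(e^t)$ is trapped between two concentric spheres of comparable radii, which translates into $\widetilde{f}$ moving each slice by a bounded amount in the $n$th coordinate. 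This is the mechanism that makes $\widetilde{f}$ behave much better than $\mathcal{Z}^{-1}$ alone near $0$.

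The first step is to choose $M < \ln r_0$ small enough that for every $t < M$ the sphere $S(e^t)$ and its $f$-image both lie well inside $B(0,r_0)$ with a definite margin, and then invoke the egg-yolk principle (\cite[Theorem 11.14]{Heinonen}) on balls of the form $B(x, \operatorname{dist}(x,\partial B(0,r_0))/2)$ to get that $f$ is $\eta_0$-quasisymmetric there, with $\eta_0$ depending only on $n$ and $K$. The second step is to transfer this to $\widetilde{f}$ \emph{locally}: on any metric ball in $B_M$ of fixed small radius (say radius comparable to the injectivity radius of $\mathcal{Z}$, which is uniform after quotienting by $G$), the composition $\mathcal{Z}^{-1}\circ f \circ \mathcal{Z} = \widetilde{f}$ is a composition of a bi-Lipschitz map, an $\eta_0$-quasisymmetric map, and another bi-Lipschitz map, hence is $\eta_1$-quasisymmetric on that ball with $\eta_1$ depending only on $n$ and $K$; here one uses that bi-Lipschitz maps are quasisymmetric and that quasisymmetry is stable under composition. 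The third step is the global upgrade. Since $B_M$ is a connected doubling metric space (the quotient metric is locally Euclidean with uniformly bounded geometry, and the $n$th coordinate ranges over a half-line), it suffices by \cite[Theorem 10.19]{Heinonen} to prove that $\widetilde{f}$ is \emph{weakly} quasisymmetric globally on $B_M$ — i.e. to produce a single constant $H$ so that $d(x,a)\le d(x,b)$ implies $d(\widetilde{f}(x),\widetilde{f}(a))\le H\,d(\widetilde{f}(x),\widetilde{f}(b))$ for all triples. For triples contained in a fixed-radius ball this follows from Step 2. For triples spread out across many ``unit'' blocks in the beam, one chains the local quasisymmetry estimates along a path in $B_M$ of controlled length, using that $\widetilde{f}$ distorts the $n$th coordinate by a bounded amount (from the local distortion estimate / Theorem \ref{thm:fn}) to control how the scale changes from block to block, so that the accumulated distortion along the chain stays bounded.

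The main obstacle is precisely this last global patching: the naive concern is that as $t\to-\infty$ the Zorich map $\mathcal{Z}$ collapses huge slices onto tiny spheres, so bi-Lipschitz constants of $\mathcal{Z}^{-1}$ blow up and one might fear the quasisymmetry constant of $\widetilde{f}$ degenerates. The resolution is that $\widetilde{f}$ is exactly the conjugate that cancels this scaling: property (ii) says $\mathcal{Z}$ on $H_t$ is $e^t$ times a fixed bi-Lipschitz map $\mathcal{Z}|_{H_0}$, and $f$ being quasisymmetric on $B(0,r_0)$ means it respects ratios of distances, so the $e^t$ factors cancel in every ratio $d(\widetilde f(x),\widetilde f(a))/d(\widetilde f(x),\widetilde f(b))$. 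Making this cancellation rigorous — i.e. showing that for $x$ in slice $H_s$ and $a,b$ in nearby slices, the relevant distances in $B_M$ are comparable (with $t$-independent constants) to $e^{-s}$ times the corresponding Euclidean distances on the spheres, and likewise for the $f$-images — is the technical heart of the argument, and it is where property (ii) and the local distortion control on how far $\widetilde f$ moves slices in the $n$th coordinate must be combined carefully. Once weak quasisymmetry with a uniform $H$ is established, \cite[Theorem 10.19]{Heinonen} finishes the proof.
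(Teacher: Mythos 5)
Your reduction to weak quasisymmetry via \cite[Theorem 10.19]{Heinonen} matches the paper's first move, and your Step 2 correctly captures the small-scale case: within a metric ball of fixed radius in $B_M$, the $e^t$ scaling factors in $\mathcal{Z}$ and $\mathcal{Z}^{-1}$ cancel in ratios, so the egg-yolk quasisymmetry of $f$ transfers to local quasisymmetry of $\widetilde{f}$ with control depending only on $n$ and $K$. (The paper reaches the same small-scale conclusion by a rescaling/normality argument via Miniowitz's quasiregular Montel theorem, which is the same mechanism in different clothing.)

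The genuine gap is Step 3, the global upgrade. Chaining local (weak) quasisymmetry estimates does not produce a bounded global constant: each application of a weak quasisymmetry bound with constant $H$ compounds, and the number of blocks a triple $x,a,b$ spans is not uniformly bounded as $d(x,b)\to\infty$. The assertion that ``the accumulated distortion along the chain stays bounded'' is exactly what needs proof, and the mere fact that $\widetilde{f}$ moves each slice by a bounded amount in the $n$th coordinate does not give it, because quasisymmetry is a ratio estimate and ratios degrade under repeated composition. The paper avoids chaining altogether. At large scales it uses that $Q$ is bounded, so $|\widetilde{f}(x)-\widetilde{f}(a)|$ is within an additive constant of $|[\widetilde f(x)]_n - [\widetilde f(a)]_n| = \bigl|\log|f(\mathcal Z(x))| - \log|f(\mathcal Z(a))|\bigr|$; Theorem~\ref{thm:fn} then yields a one-shot linear estimate in the logarithmic radius, giving $|\widetilde f(x_k)-\widetilde f(a_k)|\le 2\log C+\nu t_k$ and $|\widetilde f(b_k)-\widetilde f(x_k)|\ge -2\log C+\mu s_k$ with $s_k/t_k\to 1$, so the ratio stays bounded away from zero. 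To repair your argument, replace the chaining by this direct estimate at large scales: you already cite Theorem~\ref{thm:fn} for the right purpose (controlling $[\widetilde f]_n$), but it must be used to bound the ratio of distances in one step, not to control a chain of local estimates.
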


\begin{proof}
First, for any $M$, as $B_M$ is a connected doubling space, \cite[Theorem 10.19]{Heinonen} implies that it is sufficient to show that $\widetilde{f}$ is weakly quasisymetric. Suppose for a contradiction that $\widetilde{f}$ is not weakly quasisymmetric. Then there exists a sequence of triples of points $x_k,a_k,b_k$ in $B_M$ such that
\begin{equation} 
\label{eq:prop1}
|x_k - a_k | \leq |x_k - b_k |,\quad \text{ but } \quad |\widetilde{f}(x_k) - \widetilde{f}(a_k) | > k |\widetilde{f}(x_k) - \widetilde{f}(b_k) |.
\end{equation}
We will deal with large scales and small scales separately.

First, if $|\widetilde{f}(x_k) - \widetilde{f}(b_k)|$ does not converge to $0$ then, passing to a subsequence if necessary, we have $|\widetilde{f}(x_k) - \widetilde{f}(a_k) | \to \infty$. This implies that $| [ \widetilde{f}(x_k) ]_n - [\widetilde{f}(a_k) ]_n | \to \infty$.
Now set $t_k  = |x_k - b_k|$, so $t_k \to \infty$ by \eqref{eq:prop1}. Without loss of generality, suppose $[a_k]_n \leq [x_k]_n \leq [b_k]_n$. 
Then we have
\begin{equation}
\label{eq:prop1eq1}
[b_k ] _n \leq [x_k]_n +t_k, \quad [a_k]_n \geq [x_k]_n - t_k.
\end{equation}
Next, set $s_k = [b_k]_n - [x_k]_n$. As $t_k^2 \leq s_k^2 + (\diam Q)^2$ by Pythagoras and $(\diam Q)^2 = n+2$, we have
\begin{equation}
\label{eq:prop1eq1a}
\sqrt{ 1 - \frac{ (\diam Q)^2}{t_k^2} } \leq \frac{s_k}{t_k} \leq 1
\end{equation}
and in particular, $s_k/t_k \to 1$ as $k\to \infty$.

Next, set $y_k = \mathcal{Z}(x_k)$, $p_k = \mathcal{Z}(a_k)$ and $q_k = \mathcal{Z}(b_k)$. Using Theorem \ref{thm:fn} and \eqref{eq:prop1eq1}, we have
\begin{align*}
|\ft (x_k) - \ft(a_k) | &= \left | \log \frac{ f(y_k) }{f(p_k)} \right | \\
&\leq \log \frac{ L_f(0,e^{[x_k]_n} ) }{ \ell_f(0,e^{[a_k]_n} )} \\
&\leq \log \frac{ L_f(0,e^{[x_k]_n} ) }{ \ell_f(0, e^{-t_k} e^{[x_k]_n} ) } \\
&\leq \log \left ( C^2 (e^{-t_k} )^{-\nu } \right ),
\end{align*}
recalling that $\nu = K_O(f) ^{1/(n-1)}$ and $C$ only depends on $n$ and $K_O(f)$ since $f$ is quasiconformal. On the other hand, by Theorem \ref{thm:fn} and the definition of $s_k$ we have
\begin{align*}
|\ft (b_k) - \ft(x_k) |&= \left | \log \frac{ f(q_k)}{f(y_k)}  \right | \\
&\geq \log  \frac{ \ell_f(0, e^{[b_k]_n} ) }{ L_f( 0, e^{[x_k]_n} ) } \\
&= \log \frac{ \ell_f(0, e^{[b_k]_n} ) } { L_f (0, e^{-s_k} e^{[b_k]_n} ) } \\
& \geq \log \left (  \frac{ (e^{-s_k} )^{-\mu } }{C^2} \right ),
\end{align*}
where  $\mu = K_I(f) ^{1/(1-n)}$. Hence by \eqref{eq:prop1eq1a} we have 
\begin{align*}
\frac{ | \ft(b_k) - \ft (x_k) |} { |\ft(x_k) - \ft (a_k) | } & \geq \frac{ -2\log C + \mu s_k }{2\log C + \nu t_k} \to \frac{ \nu}{\mu} = (K_O(f)K_I(f))^{1/(1-n)}
\end{align*}
as $k\to \infty$. This provides a contradiction to \eqref{eq:prop1}.

Turning now to the small scales case, assume that $| \widetilde{f}(x_k) - \widetilde{f}(b_k) | \to 0$ and hence $t_k = |x_k-b_k|\to 0$. As $f$ is $K$-quasiconformal, $\widetilde{f}$ is $K'$-quasiconformal, where $K' \leq K[K(\mathcal{Z})]^2$. We may define a sequence of $K'$-quasiconformal maps $g_k : \overline{\B^n} \to \overline{\B^n}$ via
\[ g_k(x) = \frac{ \widetilde{f} ( x_k + |x_k - b_k| x ) - \widetilde{f}(x_k) }{L_{\widetilde{f}} (x_k, |x_k - b_k | ) } .\]
By construction, $\sup_{|x|=1} |g_k(x) | =1$ for all $k$. Since the family $\{g_k \}$ is normal by the quasiregular version of Montel's Theorem (see \cite[Theorem 4]{Miniowitz}), it follows that there exists $\delta >0$ such that 
\begin{equation}
\label{eq:prop2}
\inf_{|x|=1} |g_k(x)| >\delta
\end{equation} 
for all $k$. 

However, as $|x_k - a_k| \leq |x_k - b_k |$ and $ |\widetilde{f}(x_k) - \widetilde{f}(a_k) | > k |\widetilde{f}(x_k) - \widetilde{f}(b_k) |$, it follows that, with $y_k = (b_k - x_k) / |b_k - x_k|$, we have 
\begin{align*}
|g_k(y_k)| &=  \frac{ \widetilde{f} ( b_k ) - \widetilde{f}(x_k) }{L_{\widetilde{f}} (x_k, |x_k - b_k | ) } \\
&\leq \frac{ | \ft(a_k) - \ft(x_k) | }{k L_{\widetilde{f}} (x_k, |x_k - b_k | ) } \\
&\leq  \frac{ L_{\ft} (|x_k - a_k|) }{k L_{\ft} (|x_k - b_k |)} \to 0
\end{align*}
as $k\to \infty$. This is a contradiction to \eqref{eq:prop2}.

We conclude that $\widetilde{f}$ is weakly quasisymmetric, and hence quasisymmetric, on $B_M$.
\end{proof}

We will assume henceforth that $\ft$ is defined in $B_M$, and identify $B_M$ with $Q \times (-\infty, M)$. Given $t_0 < M-1$ and $t<1$, consider the slice 
\[ S_t = Q \times [t_0 , t_0+t ] \subset B_M.\] 
We subdivide $S_t$ into boxes in the following way. Subdivide $Q$ into $(n-1)$-dimensional cubes $T_i$ of side length $( \lfloor 1/t \rfloor )^{-1}$ with edges parallel to the unit vectors $e_1,\ldots, e_{n-1}$. Finally, we set 
\begin{equation}
\label{eq:Pi}
P_i = T_i \times [t_0,t_0+t]
\end{equation}
for $i=1,\ldots, N$.

We collect important geometric information about the quasisymmetric images of $P_i$ in the following lemma. Recall that we denote by $\vol_p$ the $p$-dimensional volume.

\begin{lemma}
\label{lem:qsp}
Let $\ft :B_M \to B$ be $\eta$-quasisymmetric and let $t\in (0,1/2)$.
Then the number of boxes in the subdivision of $S_t$ is $N$, where $N/t^{1-n}\to 2$ as $t\to 0$. Moreover, there exists a constant $C_1 >1$ depending only on $n$ and $\eta$ so that
\[ \frac{1}{C_1} \leq \frac{ \vol_n \ft (P_i) }{ (\diam \ft (P_i)) ^n} \leq C_1 ,\]
for each box $P_i$ constructed as above.
\end{lemma}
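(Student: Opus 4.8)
The plan is to prove the two claims in sequence. For the counting statement, $Q = [0,1]^{n-2} \times [0,2]$ has $(n-1)$-volume equal to $2$, and subdividing each side of length $\ell$ into pieces of length $(\lfloor 1/t \rfloor)^{-1}$ produces $\lfloor 1/t\rfloor \cdot \ell$ pieces along that edge (for $t$ small, $\lfloor 1/t\rfloor$ is large, and I will not worry about the minor rounding at the boundary of $Q$ since the side lengths $1$ and $2$ are integers and $\lfloor 1/t\rfloor$ is an integer, so the subdivision is in fact exact). Hence $N = \lfloor 1/t\rfloor^{n-1}\cdot 2$, and since $\lfloor 1/t\rfloor \sim 1/t$ as $t\to 0$, we get $N/t^{1-n} = 2\,(t\lfloor 1/t\rfloor)^{n-1} \to 2$. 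This part is routine.

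For the geometric estimate, the key point is that each $P_i = T_i \times [t_0, t_0+t]$ is a box all of whose side lengths are comparable to $t$: the transverse sides have length $(\lfloor 1/t\rfloor)^{-1} \in (t, 2t)$ for $t < 1/2$, and the axial side has length $t$. So $\diam P_i$ is comparable (with constants depending only on $n$) to $t$, and more importantly $P_i$ contains a ball of radius $\asymp t$ and is contained in a ball of radius $\asymp t$, both centred at (say) the centre $c_i$ of $P_i$. Now I invoke the quasisymmetry of $\ft$ from Proposition \ref{prop:1}: an $\eta$-quasisymmetric map distorts roundness in a controlled way. Concretely, writing $r = \operatorname{dist}(c_i,\partial P_i) \asymp t$ and $R = \max_{x\in P_i}|x-c_i| \asymp t$, quasisymmetry gives
\[
\frac{\ell_{\ft}(c_i, R)}{L_{\ft}(c_i, r)} \ge \frac{1}{\eta(R/r)} \ge \frac{1}{\eta(C)}
\]
for a dimensional constant $C$, where $\ell_{\ft}$ and $L_{\ft}$ denote the min and max of $|\ft(x)-\ft(c_i)|$ over the respective spheres. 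Consequently $\ft(P_i)$ contains the ball $B(\ft(c_i), \ell_{\ft}(c_i,r))$ and is contained in the ball $B(\ft(c_i), L_{\ft}(c_i,R))$, and these two radii are comparable up to the factor $\eta(C)$. Therefore $\diam \ft(P_i)$ is comparable, with constant depending only on $n$ and $\eta$, to either radius, while $\vol_n \ft(P_i)$ is squeezed between $\Omega_n$ times the $n$-th power of the inner radius and $\Omega_n$ times the $n$-th power of the outer radius. Dividing gives the two-sided bound $\frac{1}{C_1} \le \vol_n \ft(P_i)/(\diam\ft(P_i))^n \le C_1$ with $C_1$ depending only on $n$ and $\eta$.

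One technical point to handle carefully is that $B_M$ carries the quotient metric under $G$, so balls in $B_M$ are really images of Euclidean balls; but since $P_i$ has diameter $\asymp t < 1$, for $t$ small enough $P_i$ is far from being affected by the identifications on $\partial(Q\times\R)$ in the sense that sits inside a single isometric copy of a Euclidean ball, and $\mathcal{Z}^{-1}$ being locally bi-Lipschitz means the metric on $B_M$ agrees with the Euclidean one at this scale up to constants — I will just note this and otherwise treat $P_i$ as a Euclidean box. The main obstacle, such as it is, is bookkeeping the constants so that they depend only on $n$ and $\eta$ (and not on $t_0$, $t$, or $i$): this works precisely because the "eccentricity" $R/r$ of $P_i$ is bounded by a dimensional constant uniformly in all those parameters, so the single value $\eta(C)$ controls everything. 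No deeper ideas are needed beyond the definition of quasisymmetry and the elementary fact that a quasisymmetric image of a box with bounded eccentricity again has bounded eccentricity.
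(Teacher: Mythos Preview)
Your proof is correct and follows essentially the same approach as the paper: both take the centre of $P_i$, observe that the eccentricity $R/r$ is bounded by a dimensional constant, and apply a single $\eta$-quasisymmetry inequality to trap $\ft(P_i)$ between two concentric balls of comparable radii, yielding the two-sided volume/diameter bound. (One minor slip: in your displayed inequality the labels $r$ and $R$ on $\ell_{\ft}$ and $L_{\ft}$ are swapped relative to what you actually need; the surrounding prose correctly identifies the inner radius as $\ell_{\ft}(c_i,r)$ and the outer radius as $L_{\ft}(c_i,R)$, and the genuine quasisymmetry estimate $L_{\ft}(c_i,R)\le \eta(R/r)\,\ell_{\ft}(c_i,r)$ is what closes the argument.)
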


\begin{proof}
The process of subdividing $Q$ into $(n-1)$-dimensional cubes of side length $( \lfloor 1/t \rfloor )^{-1}$ yields $N = O(t^{1-n})$ of them as
\begin{equation}
\label{eq:qsp0} 
t\leq ( \lfloor 1/t \rfloor )^{-1} \leq \frac{t}{1-t} \leq 2t,
\end{equation}
$\vol_{n-1} Q =2$ by construction and $\vol_{n-1} Q = N ( \lfloor 1/t \rfloor )^{1-n}$. Note that as $t\to 0$, $(\lfloor 1/t \rfloor) ^{-1} / t \to 1$ and so $N/t^{(1-n)} \to 2$.

Next, let $x_0$ be the centroid of $P_i$. By \eqref{eq:qsp0} we have
\begin{equation}
\label{eq:qsp1}
\frac{t}{2} \leq  \operatorname{dist}( x_0 , \partial P_i) .
\end{equation}
Let $y,z$ be any two points on $\partial P_i$. Then since $\ft$ is $\eta$-quasisymmetric, we have
\[ \frac{ |\ft (y) - \ft (z) | }{ |\ft (y) -\ft (x_0) | } \leq \eta \left ( \frac{ |y-z| }{|y-x_0|} \right ).\]
Choosing $y,z$ so that $|\ft (y) - \ft(z)|$ realizes the diameter of $\ft (P_i)$, we obtain via \eqref{eq:qsp1} that
\begin{equation}
\label{eq:qsp2} 
\operatorname{diam} \ft (P_i) \leq \dist ( \ft(x_0) , \partial \ft (P_i) ) \cdot \eta \left ( \frac{t \sqrt{n} }{  t/2 } \right ) =\dist ( \ft(x_0) , \partial \ft (P_i) ) \cdot  \eta (2\sqrt{n} ) .
\end{equation}
Since 
\[ B(\ft ( x_0) , \operatorname{dist}( \ft (x_0) , \partial \ft (P_i) ) ) \subset \ft (P_i) \subset B( \ft (x_0) , \operatorname{diam} \ft (P_i) ),\]
it follows that
\[ \Omega_n \left ( \operatorname{dist}( \ft (x_0) , \partial \ft (P_i) ) \right )^n \leq | \ft (P_i) | \leq \Omega_n \left ( \operatorname{diam} \ft (P_i)  \right )^n.\]
Combining this with \eqref{eq:qsp2}, we obtain
\[ \frac{\Omega_n }{\eta ( 2\sqrt{n} )} \leq \frac{ |\widetilde{f}(P_i) |}{ ( \operatorname{diam} \widetilde{f}(P_i)  )^n } \leq \Omega_n ,\]
as required.
\end{proof}

\begin{definition}
Given a box $P_i$ constructed as above, denote by $\nu( \ft (P_i))$ the minimum distance between images under $\ft$ of opposite pairs of faces of $P_i$.
\end{definition}

\begin{lemma}
\label{lem:2}
Let $\ft :B_M \to B$ be $\eta$-quasisymmetric and let $t\in (0,1/2)$. There exists a constant $C_2>1$ depending only on $n$ and $\eta$ such that if $P_i$ is a box constructed as in \eqref{eq:Pi} with parameter $t$, then
\[ 1 \leq \frac{ \operatorname{diam}(\ft (P_i)) }{ \nu ( \ft (P_i)) } \leq C_2.\]
\end{lemma}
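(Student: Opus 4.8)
The plan is to prove the two inequalities separately. The left-hand inequality is essentially formal. Since $\ft$ is quasisymmetric it is an embedding (see \cite{Heinonen}), so for any pair of opposite faces $F,F'$ of the box $P_i$ the sets $\ft(F)$ and $\ft(F')$ are disjoint and compact, hence at positive distance; in particular $\nu(\ft(P_i))>0$, so the ratio in the statement is a well-defined real number. Moreover, if $y\in F$ and $z\in F'$ then both $\ft(y)$ and $\ft(z)$ lie in $\ft(P_i)$, whence $|\ft(y)-\ft(z)|\le\diam(\ft(P_i))$; taking the infimum over such $y,z$ and then the minimum over the $n$ pairs of opposite faces gives $\nu(\ft(P_i))\le\diam(\ft(P_i))$, which is the lower bound.

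For the upper bound, the idea is that a single base point controls everything. First I would record the elementary geometry of $P_i=T_i\times[t_0,t_0+t]$: by \eqref{eq:qsp0}, each side of the cube $T_i$ has length $(\lfloor 1/t\rfloor)^{-1}\in[t,2t]$ and the remaining side has length $t$, so $\diam(P_i)\le\sqrt{(n-1)(2t)^2+t^2}\le 2\sqrt n\,t$, while any two points on opposite faces of $P_i$ differ by a full side length in one coordinate direction and so are at distance at least $t$. Then I would choose a pair of opposite faces $F,F'$ realizing $\nu(\ft(P_i))$ together with points $y\in F$, $z\in F'$ for which $|\ft(y)-\ft(z)|=\nu(\ft(P_i))$ (possible by compactness), and also points $a,b\in P_i$ with $|\ft(a)-\ft(b)|=\diam(\ft(P_i))$. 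For $w\in\{a,b\}$ one has $|y-w|\le\diam(P_i)\le 2\sqrt n\,t$ and $|y-z|\ge t$, hence $|y-w|/|y-z|\le 2\sqrt n$; applying $\eta$-quasisymmetry of $\ft$ with base point $y$ then gives $|\ft(w)-\ft(y)|\le\eta(2\sqrt n)\,\nu(\ft(P_i))$, and the triangle inequality yields $\diam(\ft(P_i))=|\ft(a)-\ft(b)|\le 2\eta(2\sqrt n)\,\nu(\ft(P_i))$. So the lemma holds with $C_2=\max\{2\eta(2\sqrt n),\,2\}$, which depends only on $n$ and $\eta$ and exceeds $1$. (Alternatively one could route the upper bound through the estimate \eqref{eq:qsp2} from the proof of Lemma \ref{lem:qsp}, using that the centroid $x_0$ of $P_i$ satisfies $\dist(\ft(x_0),\partial\ft(P_i))\le|\ft(x_0)-\ft(y)|\le\eta(2\sqrt n)\,\nu(\ft(P_i))$.)

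I do not expect a serious obstacle. The only points needing a little care are: the positivity $\nu(\ft(P_i))>0$, which is what makes the stated ratio meaningful and which uses injectivity of $\ft$; and the uniformity of the bound $|y-w|/|y-z|\le 2\sqrt n$, which must hold with a constant independent of the box $P_i$ and of $t\in(0,1/2)$ — this is precisely what \eqref{eq:qsp0} supplies. The particular value $2\sqrt n$ is taken only to match the argument of $\eta$ already appearing in Lemma \ref{lem:qsp}; any bound depending on $n$ alone would do.
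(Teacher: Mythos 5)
Your proof is correct, and it takes a cleaner route than the paper's for the upper bound. The paper fixes $x,y$ realizing $\nu(\ft(P_i))$ and $p,q$ realizing $\diam(\ft(P_i))$, then writes the ratio as a product of two quasisymmetry ratios,
\[ \frac{|\ft(p)-\ft(q)|}{|\ft(x)-\ft(y)|} = \frac{|\ft(p)-\ft(q)|}{|\ft(x)-\ft(q)|}\cdot\frac{|\ft(x)-\ft(q)|}{|\ft(x)-\ft(y)|} \le \eta\!\left(\tfrac{|p-q|}{|x-q|}\right)\eta\!\left(\tfrac{|x-q|}{|x-y|}\right), \]
which forces a case distinction (one must arrange $|x-q|\ge t/2$, swapping $x$ and $y$ if necessary) to keep the first ratio controlled, and yields $C_2=\eta(4\sqrt n)\eta(2\sqrt n)$. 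You instead anchor a single quasisymmetry estimate at the endpoint $y$ of the minimizing pair, bound $|\ft(a)-\ft(y)|$ and $|\ft(b)-\ft(y)|$ separately by $\eta(2\sqrt n)\,\nu(\ft(P_i))$, and finish with the triangle inequality, giving $C_2=2\eta(2\sqrt n)$ with no case split. Both are sound quasisymmetry arguments resting on the same geometric facts from \eqref{eq:qsp0} (namely $\diam P_i\le 2\sqrt n\,t$ and opposite-face separation $\ge t$); yours trades one application of $\eta$ for a factor of $2$ and avoids the WLOG step, which is a genuine, if small, simplification. You also spell out why $\nu(\ft(P_i))>0$ and why the lower bound holds, which the paper dismisses as trivial; that is fine, though not necessary.
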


\begin{proof}
The lower bound here is trivial.
Given $P_i$, find $x,y \in P_i$ which realize the minimum in the definition of $\nu ( \ft (P_i))$. Necessarily, $x$ and $y$ must lie in opposite faces of the boundary of $P_i$. Applying the quasisymmetry of $\ft$ twice, for any $p,q \in P_i$ we have
\[ \frac{ |\ft(p) - \ft(q) | }{ |\ft(x) - \ft(y) | } = \frac{|\ft(p) - \ft(q) |}{|\ft(x) - \ft(q)|} \cdot \frac{|\ft(x) - \ft(q) |}{|\ft(x)-\ft(y)|} \leq \eta \left ( \frac{ |p-q| }{|x-q| } \right ) \eta \left ( \frac{ |x-q|}{ |x-y |} \right ).\]
We choose $p,q$ so that their images realize $\operatorname{diam} \ft (P_i)$ and, without loss of generality, we may assume that $|x-q| \geq t/2$ (if not, switch the roles of $x$ and $y$). From \eqref{eq:qsp0}, we have $|x-q| \leq 2t\sqrt{n}$, $|p-q| \leq 2t \sqrt{n} $ and $|x-y| \geq t$, from which it follows that 
\[  \frac{ |\ft(p) - \ft(q) | }{ |\ft(x) - \ft(y) | } \leq \eta ( 4\sqrt{n} ) \eta ( 2\sqrt{n} ) ,\]
completing the proof.
\end{proof}

\begin{lemma}
\label{lem:3}
Let $\ft :B_M \to B$ be $\eta$-quasisymmetric and let $t\in (0,1/2)$. There exists a constant $C_3$ depending only on $n$ and $\eta$ such that if $P_i$ is a box constructed as in \eqref{eq:Pi} with parameter $t$, and $Q_i$ denotes the base of the box, that is, $Q_i = P_i \cap (Q \times \{t_0 \})$, then
\[ \frac{ \nu (\ft (P_i) ) ^{n-1} }{\vol_{n-1} \ft (Q_i) } \leq C_3.\]
\end{lemma}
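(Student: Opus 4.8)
The plan is to reduce the bound to the purely geometric inequality
$\mathcal H^{n-1}(\widetilde f(Q_i))\ge c(n,\eta)\,\nu(\widetilde f(P_i))^{n-1}$ and then prove that. Identifying $\vol_{n-1}\widetilde f(Q_i)$ with $\mathcal H^{n-1}(\widetilde f(Q_i))$ — these agree via the area formula for the injective parametrization $\widetilde f|_{Q\times\{t_0\}}$ when the parametrization integral is finite, and the asserted inequality is vacuous when the measure is infinite — it suffices to bound $\mathcal H^{n-1}(\widetilde f(Q_i))$ from below. The engine is an elementary sublemma: if $\mathcal B\subset\R^n$ is a ball, $E\subset\mathcal B$ is relatively closed, and $\mathcal B\setminus E$ has two distinct components each containing a ball of radius $\rho$, then $\mathcal H^{n-1}(E)\ge\Omega_{n-1}\rho^{n-1}$. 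Indeed, let $q_1,q_2$ be the centres of those two balls and let $\pi$ be orthogonal projection onto the hyperplane $H$ through $q_1$ perpendicular to $q_1-q_2$, so $\pi(q_1)=\pi(q_2)$; if some $x\in H$ with $|x-\pi(q_1)|<\rho$ were not in $\pi(E)$, the chord $\mathcal B\cap\pi^{-1}(x)$ would be a connected subset of $\mathcal B\setminus E$ meeting both $B(q_1,\rho)$ and $B(q_2,\rho)$, contradicting that these lie in different components. Hence $\pi(E)\supset B_H(\pi(q_1),\rho)$, and since $\pi$ is $1$-Lipschitz, $\mathcal H^{n-1}(E)\ge\mathcal H^{n-1}(\pi(E))\ge\Omega_{n-1}\rho^{n-1}$.

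To build the configuration, enlarge the box $P_i=T_i\times[t_0,t_0+t]$ (of side $\ell=\lfloor1/t\rfloor^{-1}$) to the $n$-cube $P_i^{\mathrm{th}}:=T_i\times[t_0-\ell/2,t_0+\ell/2]$, which still lies in $B_M$ since $\ell<2t<1$ and $t_0<M-1$. Then $\widetilde f$ is $\eta$-quasisymmetric on $P_i^{\mathrm{th}}$, the face $Q_i$ is the middle cross-section of $P_i^{\mathrm{th}}$, and $S:=\widetilde f(Q_i)$ separates $\Omega:=\widetilde f(P_i^{\mathrm{th}})$ into exactly two components $W^+,W^-$ (the images of the upper and lower halves). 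Let $z_0$ be the common centroid of $Q_i$ and $P_i^{\mathrm{th}}$ and $w_0=\widetilde f(z_0)\in S$. The egg-yolk estimates used in the proof of Lemma \ref{lem:qsp} then yield, with constants depending only on $n$ and $\eta$: $R:=\dist(w_0,\partial\Omega)\ge c_2\diam\Omega$; for each small $\delta>0$, $D_\delta:=\diam\widetilde f(B(z_0,\delta\ell))\le\eta(4\delta/\sqrt n)\diam\Omega$ and, since $z_0\in P_i$, $D_\delta\ge c(n,\eta,\delta)\,\diam\widetilde f(P_i)\ge c(n,\eta,\delta)\,\nu(\widetilde f(P_i))$; and, because the centroid of a half-ball sits a dimensional distance from its boundary, the image under $\widetilde f$ of each half $B(z_0,\delta\ell)\cap\{\pm(x_n-t_0)>0\}$ contains a ball $B(q^\pm,c_5D_\delta)$ centred at the image of that centroid, with $B(q^\pm,c_5D_\delta)\subset W^\pm$, $|q^\pm-w_0|\le D_\delta$, and $c_5=c_5(n,\eta)$.

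Now fix $\delta=\delta(n,\eta)$ small enough that $(1+c_5)\eta(4\delta/\sqrt n)\le c_2/2$. Then $(1+c_5)D_\delta\le\tfrac12 c_2\diam\Omega\le R/2<R$, so both balls $B(q^\pm,c_5D_\delta)$ lie inside the round ball $\mathcal B:=B(w_0,R)$, which in turn lies in $\Omega$. Since $S$ separates $\Omega$ into $W^+,W^-$ and $q^\pm\in W^\pm$, the balls $B(q^\pm,c_5D_\delta)$ lie in two distinct components of $\mathcal B\setminus S$; applying the sublemma with $E=S\cap\mathcal B$ and $\rho=c_5D_\delta$ gives $\mathcal H^{n-1}(\widetilde f(Q_i))\ge\mathcal H^{n-1}(S\cap\mathcal B)\ge\Omega_{n-1}(c_5D_\delta)^{n-1}$, and combining with $D_\delta\ge c(n,\eta,\delta)\nu(\widetilde f(P_i))$ (with $\delta$ now fixed in terms of $n,\eta$) yields the inequality with $C_3=C_3(n,\eta)$. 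I expect the main obstacle to be exactly this packing step: one needs the two "fat sides'' of $S$ near $w_0$ to fit inside a genuine round ball $\mathcal B$ (so that the chord argument of the sublemma can run) while $\mathcal B$ still sits inside the non-convex domain $\Omega$, and this is what forces the auxiliary small scale $\delta\ell$ and the use of $\eta(0)=0$ to make $D_\delta$ negligible against $\diam\Omega$; all remaining points are routine bookkeeping with the quasisymmetry/egg-yolk inequalities already used for Lemmas \ref{lem:qsp} and \ref{lem:2}, together with $\nu(\widetilde f(P_i))\le\diam\widetilde f(P_i)\le C(n,\eta)\diam\Omega$.
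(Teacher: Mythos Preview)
Your argument is correct and follows a genuinely different route from the paper's.  The paper works entirely on the $(n-1)$-dimensional base $Q_i$: with the centroid $x_0$ of $Q_i$ as pivot, a single quasisymmetry estimate gives $\dist\bigl(\ft(x_0),\ft(\partial Q_i)\bigr)\ge s:=\nu(\ft(P_i))/\eta(4\sqrt{n-1})$, and the paper then passes in one line to $\vol_{n-1}\ft(Q_i)\ge\Omega_{n-1}s^{n-1}$.  You instead thicken $P_i$ to an $n$-cube $P_i^{\mathrm{th}}$, use that $\ft(Q_i)$ \emph{separates} $\ft(P_i^{\mathrm{th}})$, and feed two fat half-ball images into a projection sublemma inside a round ball to get the $\mathcal H^{n-1}$ lower bound.

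The paper's route is much shorter, but its last implication is precisely where separation is being used implicitly: a topological $(n-1)$-disk in $\R^n$ containing a point at distance $s$ from its boundary need not have $(n-1)$-measure $\gtrsim s^{n-1}$ in general (a thin cone with apex at that point shows this), so the one-line conclusion tacitly relies on the ambient separating role of $\ft(Q\times\{t_0\})$.  Your argument makes that mechanism explicit and self-contained, at the cost of the auxiliary small scale $\delta$ and a second pass through the egg-yolk estimates; a minor bonus is that your bound is for Hausdorff measure and therefore indifferent to whether the parametrization integral defining $\vol_{n-1}\ft(Q_i)$ converges.  The specific constants you quote (e.g.\ $\eta(4\delta/\sqrt n)$) are not quite the ones the computation gives, but the only property used is $\eta(c\delta)\to0$ as $\delta\to0$, so this is cosmetic.
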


\begin{proof}
Given $Q_i$, let $x_0$ be its centroid. As $Q_i$ is an $(n-1)$-dimensional cube of side length $( \lfloor 1/t \rfloor )^{-1}$, it follows from \eqref{eq:qsp0} that
\begin{equation}
\label{eq:l3eq1} 
\frac{t}{2} \leq \operatorname{dist} ( x_0 , \partial Q_i ) \leq t\sqrt{n-1} .
\end{equation}
Here, we denote by $\partial Q_i$ the $(n-2)$-dimensional faces on the edge of $Q_i$.
Let $y,z \in \partial Q_i$ be such that $\ft(y)$ and $\ft(z)$ realize the minimum distance between images under $\ft$ of opposite pairs of faces of $\partial Q_i$. Then $| \ft(y) - \ft(z)| \geq \nu (\ft (P_i))$. By the quasisymmetry of $\ft$ and \eqref{eq:l3eq1}, it follows that
\[ \frac{ \nu (\ft (P_i) ) }{ |\ft(x_0) - \ft (y) | } \leq \frac{ |\ft(y) - \ft(z) |}{ |\ft(x_0) - \ft (y)| } \leq \eta \left ( \frac{|y-z|}{|x_0 - y| } \right )
\leq \eta \left ( \frac{ 2t\sqrt{n-1} }{t/2} \right ) = \eta (4\sqrt{n-1} ).\]
We conclude that
\[ \nu (\ft(P_i)) \leq \eta (4\sqrt{n-1} ) \operatorname{dist} (\ft(x_0) , \partial Q_i).\]
Thus for $s = \nu(\ft (P_i) ) / \eta (4\sqrt{n-1} )$, the ball $B(\ft(x_0) , s)$ does not meet $\ft(\partial Q_i)$. It follows that
\[ \vol_{n-1} \ft (Q_i) \geq \frac{ \Omega_{n-1} \nu (\ft (P_i)) ^{n-1} }{\eta (4\sqrt{n-1})^{n-1} },\]
from which the lemma follows.
\end{proof}

\section{Volume comparison}

\begin{lemma}
\label{lem:4}
Let $\ft :B_M \to B$ be $\eta$-quasisymmetric and let $t\in (0,1/2)$. Let $S_t = Q \times [t_0,t_0+t]$ and denote by $V_t$ the volume $\vol_n S_t$. Then there exist $C_4>1$, depending only on $n$ and $\eta$, and $t_1>0$ so that for $t \in (0,t_1)$ we have
\[ \frac{1}{C_4} \leq \frac{ \widetilde{\rho _f} (t_0+t) - \widetilde{\rho _f} (t_0) }{V_t} \leq C_4.\]
\end{lemma}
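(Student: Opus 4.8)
The plan is to re-express the numerator as an $n$-volume on the Zorich side, discretize it along the boxes $P_i$ of \eqref{eq:Pi}, and then play the $\ell^p$-inequality \eqref{eq:lp} (for the lower bound) against the integral of Definition \ref{def:main} (for the upper bound). I write $f\asymp g$ for two-sided bounds with constants depending only on $n$ and $\eta$.

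\emph{Reduction.} Since $\widetilde{\rho_f}(s)=\ln\rho_f(e^s)$, write $\rho_0=\rho_f(e^{t_0})$, $\rho_1=\rho_f(e^{t_0+t})$ and $A_t=B(0,e^{t_0+t})\setminus\overline{B(0,e^{t_0})}$. As $f$ is a homeomorphism satisfying the Lusin (N) condition, $\rho_1^n-\rho_0^n=\vol_n f(A_t)/\Omega_n$. Theorem \ref{thm:fn} (with $i(0,f)=1$) bounds $\rho_1/\rho_0$ above by a constant depending only on $n$ and $K$ once $t<1$, so fixing $t_1\le 1/2$ and using $\ln(1+x)\asymp x$ for $x$ in a bounded range gives
\[ \widetilde{\rho_f}(t_0+t)-\widetilde{\rho_f}(t_0)\ \asymp\ \frac{\vol_n f(A_t)}{\Omega_n\rho_0^n}. \]
Now $f(A_t)=\mathcal Z(\ft(S_t))$, and differentiating $\mathcal Z(x_1,\dots,x_{n-1},s)=e^s\mathcal Z(x_1,\dots,x_{n-1},0)$ shows $|J_{\mathcal Z}(y)|\asymp e^{n[y]_n}=|\mathcal Z(y)|^n$ with constants depending only on $n$. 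Thus $\vol_n f(A_t)=\int_{S_t}|J_{\mathcal Z}(\ft(\xi))|\,|J_{\ft}(\xi)|\,d\xi$, and on $S_t$ one has $e^{[\ft(\xi)]_n}=|f(\mathcal Z(\xi))|\in[\ell_f(0,e^{t_0}),L_f(0,e^{t_0+t})]$, an interval which Theorem \ref{thm:fn} squeezes to $\asymp\rho_0$. Hence $\vol_n f(A_t)\asymp\rho_0^n\vol_n\ft(S_t)$, so the lemma is equivalent to the claim $\vol_n\ft(S_t)\asymp V_t$.

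\emph{Lower bound.} Subdivide $S_t$ into the $N$ boxes $P_i$ of \eqref{eq:Pi}; by Lemma \ref{lem:qsp}, $N\asymp t^{1-n}$ and, writing $d_i=\diam\ft(P_i)$, $\vol_n\ft(P_i)\asymp d_i^n$, so $\vol_n\ft(S_t)=\sum_i\vol_n\ft(P_i)\asymp\sum_i d_i^n$. By \eqref{eq:lp}, $\sum_i d_i^n\ge N^{1/(1-n)}\big(\sum_i d_i^{n-1}\big)^{n/(n-1)}$, so it is enough to bound $\sum_i d_i^{n-1}$ below by a positive constant depending only on $n$; then $\sum_i d_i^n\gtrsim N^{1/(1-n)}\asymp t$. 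For this I would use that $\ft(Q\times\{t_0\})=(\mathcal Z|_B)^{-1}(f(S(0,e^{t_0})))$ projects onto the compact quotient $Q/G$ under the coordinate projection: $f(S(0,e^{t_0}))$ is a topological $(n-1)$-sphere enclosing $0$, hence meets every ray $\mathcal Z(\{q\}\times\R)$ out of the origin. Therefore the projections of the sets $\ft(Q_i)$, $Q_i=P_i\cap(Q\times\{t_0\})$, cover $Q/G$, each with diameter at most $\diam\ft(Q_i)\le d_i$; since $Q/G$ has positive $(n-1)$-dimensional Hausdorff content, the definition of content forces $\sum_i d_i^{n-1}\gtrsim 1$.

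\emph{Upper bound (the main obstacle).} This step cannot rely on quasisymmetry alone — by Proposition \ref{prop:0}, read through the reduction above, $\vol_n\ft(S_t)/t$ is unbounded as $t\to 0$ when $f(S(0,e^{t_0}))$ is non-rectifiable — so it must invoke the BIP hypothesis of Definition \ref{def:main}, and $C_4$ should be allowed to depend on $P$. Lemmas \ref{lem:2} and \ref{lem:3} give $d_i\le C_2\,\nu(\ft(P_i))$ and $\nu(\ft(P_i))^{n-1}\le C_3\,\vol_{n-1}\ft(Q_i)$, hence $d_i^{n-1}\le C_2^{n-1}C_3\,\vol_{n-1}\ft(Q_i)$, and by the volume formula for parameterized hypersurfaces $\vol_{n-1}\ft(Q_i)=\int_{Q_i}\big\|\Pi(\partial\gamma/\partial x_1,\dots,\partial\gamma/\partial x_{n-1})\big\|_2\,dV$ with $\gamma=\ft|_{Q\times\{t_0\}}$. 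Applying Hölder's inequality on each cube $Q_i$ (which has $\vol_{n-1}Q_i\asymp t^{n-1}$),
\[ \sum_i d_i^n\ \lesssim\ \sum_i\big(\vol_{n-1}\ft(Q_i)\big)^{n/(n-1)}\ \le\ \max_i(\vol_{n-1}Q_i)^{1/(n-1)}\int_Q\big\|\Pi(\cdots)\big\|_2^{n/(n-1)}\,dV\ \lesssim\ tP, \]
so $\vol_n\ft(S_t)\asymp\sum_i d_i^n\lesssim tP$ and the upper bound follows. The delicate point is that $\gamma$ is merely quasisymmetric, so the volume formula and the identification of $\|\Pi(\cdots)\|_2$ with the measure-theoretic $(n-1)$-Jacobian of $\ft$ along the slice have to be justified through the $W^{1,n}$-regularity of $\ft$ and difference quotients; the BIP bound is precisely what makes $\vol_{n-1}\ft(Q\times\{t_0\})$ finite, so that this is legitimate, and it is also what the stated constant $C_4$ must ultimately depend on.
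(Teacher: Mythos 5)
There is a typo in the statement you were given: $V_t$ should be $\vol_n \ft(S_t)$, not $\vol_n S_t$. This is clear from the paper's own proof (which reads ``$V_t$ is related to $A_t$ via $V_t = \vol_n \mathcal{Z}^{-1}(f(A_t))$'', and $\mathcal{Z}^{-1}(f(A_t))\cap B=\ft(S_t)$) and from the proof of Theorem~\ref{thm:1}, where $V_t$ is explicitly set to $\vol_n\ft(S_t)$. Taken literally, with $V_t=\vol_n S_t=2t$, the lemma would already be the local bi-Lipschitz statement of Theorem~\ref{thm:1} --- which, as Proposition~\ref{prop:0} shows, is false under the stated hypothesis of mere quasisymmetry. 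You noticed exactly this when you wrote that ``the upper bound must invoke the BIP hypothesis'' and that ``$C_4$ should be allowed to depend on $P$'': since the lemma explicitly asserts $C_4$ depends only on $n$ and $\eta$, and does not assume BIP, that tension should be read as a signal that you were proving a stronger statement than intended.

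With the corrected $V_t$, your ``Reduction'' paragraph already \emph{is} the proof of Lemma~\ref{lem:4}, and it matches the paper's proof in substance. The paper passes from $\vol_n f(A_t)$ to $\vol_n\ft(S_t)$ by rescaling $f(A_t)$ by $\ell_f(e^{t_0})$ into the fixed compact ring $R_H=\{1\le|x|\le H\}$ (linear distortion of a quasiconformal map), where $\mathcal{Z}^{-1}$ is $\alpha$-bi-Lipschitz, then translating; you instead use the change-of-variables formula with the Jacobian $|J_{\mathcal{Z}}(y)|\asymp e^{n[y]_n}$ and the pinching $e^{[\ft(\xi)]_n}\in[\ell_f(0,e^{t_0}),L_f(0,e^{t_0+t})]\asymp\rho_0$ from Theorem~\ref{thm:fn}. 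These are two equivalent ways of saying the same thing; your route avoids introducing the compact ring and is arguably a hair slicker, but relies (correctly, since $\ft$ is quasiregular and satisfies Lusin~(N)) on the measure-theoretic change-of-variables identity. Both establish $\widetilde{\rho_f}(t_0+t)-\widetilde{\rho_f}(t_0)\asymp\vol_n\ft(S_t)$ with constants depending only on $n$ and $\eta$, which is the lemma.

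Your ``Lower bound'' and ``Upper bound'' sections are not part of Lemma~\ref{lem:4}: they are the paper's proof of Theorem~\ref{thm:1} (the $\ell^p$ inequality, the projection-covering argument, Lemmas~\ref{lem:2} and~\ref{lem:3}, and the BIP integral). They are correct in spirit, but they belong one level up. Reproducing them here is not wrong, just misplaced, and it is what led you to the (correctly diagnosed) apparent mismatch between the hypotheses and the constant.

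Two minor points on the Reduction. First, $\rho_f(s)^n=\vol_n f(B(0,s))/\Omega_n$, so the ratio $\rho_1/\rho_0$ picks up a factor $1/n$ against the volume ratio; the paper's display \eqref{eq:l4eq2} is also loose on this, and it is harmless since one is only after two-sided bounds. Second, to make $\ln(1+x)\asymp x$ legitimate you need $x=\vol_n f(A_t)/\vol_n f(B(0,e^{t_0}))$ to lie in a fixed compact subinterval of $(-1,\infty)$ uniformly in $t_0$; this follows from Theorem~\ref{thm:fn} exactly as you say, and is worth spelling out since $t_1$ must not depend on $t_0$.
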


\begin{proof}
By the definition of $\widetilde{\rho_f}$, we have
\begin{equation}
\label{eq:l4eq1}
\widetilde{\rho_f}(t_0+t) - \widetilde{\rho_f}(t_0) = \log \rho_f ( e^{t_0+t}) - \log \rho_f(e^{t_0}) = \log \left ( \frac{ \rho_f(e^{t_0}e^t) }{\rho_f( e^{t_0} ) } \right ).
\end{equation}
Intepreting $\rho_f(s)$ as the normalized volume $\vol_n f(B(0,s)) / \Omega_n$, \eqref{eq:l4eq1} yields
\begin{align}
\label{eq:l4eq2}
\widetilde{\rho_f}(t_0+t) - \widetilde{\rho_f}(t_0) &= \log \left ( \frac{ \vol_n f(B(0,e^{t_0}e^t ) ) }{\vol_n f(B(0,e^{t_0})) } \right ) \nonumber\\
&= O \left ( \frac{ \vol_n f(B(0,e^{t_0}e^t ) ) - \vol_n f(B(0,e^{t_0}))} { \vol_n f(B(0,e^{t_0}))} \right ) 
\end{align}
as $t\to 0$. 
Denoting by $A_t$ the annulus $\{ x : e^{t_0} \leq |x| \leq e^{t_0}e^t \}$, we see that the numerator on the right hand side of \eqref{eq:l4eq2} is $\vol_n f(A_t)$. Moreover, $V_t$ is related to $A_t$ via $V_t = \vol_n \mathcal{Z}^{-1}( f(A_t))$.

Now, $\mathcal{Z}^{-1}$ is locally bi-Lipschitz away from the origin, but we want an estimate for $\mathcal{Z}^{-1}$ that will work for all small $t_0$. To that end, we observe that on a punctured neighbourhood of the origin, we have
\[ \mathcal{Z}^{-1}(x) = \mathcal{Z}^{-1} \left ( \frac{x}{\ell_f(e^{t_0}) } \right ) - \left [ \ln \frac{1}{\ell_f (e^{t_0} ) } \right ] e_n.\]
Since $f$ is quasiconformal, it has finite linear distortion at $0$, that is, there exist $r_0>0$ and a constant $H\geq 1$ such that
\[ \frac{ L_f(r) }{\ell_f(r) } \leq H \]
for $0<r<r_0$. Hence if $e^{t_0} < r_0$, we may conclude that the image of $f(A_t)$ under the map $x/\ell_f(e^{t_0})$ is contained in the ring $R_H = \{ x : 1\leq |x| \leq H \}$. Since this set is compact in $\R^n \setminus \{ 0 \}$, $\mathcal{Z}^{-1}$ is $\alpha$-bi-Lipschitz on $R_H$, for some $\alpha\geq 1$. Finally, the map $x - \left [ \ln \frac{1}{\ell_f (e^{t_0} ) } \right ] e_n$ is a translation, and hence just an isometry. Putting this together, we obtain
\begin{equation}
\label{eq:l4eq3} 
\frac{ \vol_n f(A_t) }{\alpha^n \ell_f (e^{t_0})^n } \leq  V_t \leq \frac{ \alpha^n \vol_n f(A_t) }{\ell_f(e^{t_0}) ^n }.
\end{equation}

By \eqref{eq:l4eq2} and \eqref{eq:l4eq3} we have
\begin{align*}
\widetilde{\rho_f}(t_0+t) - \widetilde{\rho_f}(t_0)  &= O \left ( \frac{ \vol_n f(A_t) }{\vol_n f(B(0,e^{t_0} )) } \right ) \\
&= O \left ( \frac{ \ell_f(e^{t_0})^n V_t }{\vol_n f(B(0,e^{t_0} )) } \right )\\
&= O(V_t),
\end{align*}
where we use the fact that $\ell_f(e^{t_0})^n$ is comparable to $\vol_n f(B(0,e^{t_0}))$ for all small enough $e^{t_0}$. This completes the proof.
\end{proof}

\section{Proof of Theorem \ref{thm:1}}

Assuming $\widetilde{\rho_f}$ is locally $L$-bi-Lipschitz at every $t\in (-\infty, M)$, for some $L$ depending on $n,K$ and $P$, it follows that $\widetilde{\rho_f}$ is globally $L$-bi-Lipschitz on $(-\infty, M)$. Hence it is enough to show that $\widetilde{\rho_f}$ is locally $L$-bi-Lipschitz.
Since the quasisymmetry function $\eta$ of $\ft$ depends only on the maximal dilatation $K$ of $f$ (and the maximal dilatation of the fixed map $\mathcal{Z}$), at any place below where a constant depends on $\eta$, it depends on $K$. 

In the following subsections, we will show that bounds exist for 
\[ \frac{ \widetilde{\rho_f} (t_0+t) - \widetilde{\rho_f}(t_0) }{t} \]
as $t \to 0$ for $t>0$. The arguments below can be easily modified to take into account the case where $t<0$, and so we focus solely on the case when $t>0$.

\subsection{The lower bound}

Suppose $\ft$ satisfies the hypotheses of Theorem \ref{thm:1}, $\epsilon >0$ is small and for some $t_0 < M-1$, some $0<t_2<1/2$ and all $0<t<t_2$, we have
\[ \frac{ \widetilde{\rho_f} (t_0+t) - \widetilde{\rho_f}(t_0) }{t} <\epsilon. \]
Our goal here is to show that if $\epsilon$ is too small, we obtain a contradiction.

Set $S_t = Q \times [t_0 , t_0+t]$ and let $V_t = \vol_n \ft (S_t)$. 
By Lemma \ref{lem:4} it follows that for $0<t<\min \{ t_1,t_2 \}$ we have
\begin{equation}
\label{eq:lb1}
V_t \leq C_4 \epsilon t.
\end{equation}

Next, cover $Q \times \{t_0 \}$ by $(n-1)$-dimensional cubes of side length $( \lfloor 1/t \rfloor )^{-1}$ and form the $n$-dimensional boxes $P_i$, for $i=1,\ldots, N$. Since the boxes $P_i$ cover $S_t$ with overlaps only on their boundaries, we have
\[ V_t = \sum_{i=1}^N \vol_n \ft (P_i ).\]
By Lemma \ref{lem:qsp}, this yields
\[ V_t \geq C_1^{-1} \sum_{i=1}^N \left ( \operatorname{diam} \ft (P_i) \right )^n.\]
Applying \eqref{eq:lp} we obtain
\begin{equation}
\label{eq:lb2}
V_t \geq C_1^{-1} N^{1/(1-n)} \left ( \sum_{i=1}^N \left ( \operatorname{diam} \ft (P_i) \right ) ^{n-1} \right ) ^{n/(n-1)}.
\end{equation}
Next, let $\mathcal{P} : B \to Q$ denote the orthogonal projection from the beam $B$ onto $Q$. It is clear that 
\[   \diam \mathcal{P}( \ft (P_i) )  \leq \diam \ft (P_i) \]
and hence that
\begin{equation}
\label{eq:lb3} 
\vol_{n-1} \mathcal{P}( \ft (P_i) ) \leq \frac{ \Omega_{n-1} (\diam \ft (P_i))^{n-1} }{2^{n-1} }.
\end{equation}
Now, we must have 
\[ Q \subset \bigcup_{i=1}^N \mathcal{P} ( \ft (P_i ) ),\]
for otherwise $\partial f(B(0,e^{t_0}))$ would not separate $0$ and $\infty$.
Hence 
\begin{equation}
\label{eq:lb4}
\sum_{i=1}^N \vol_{n-1} \mathcal{P}( \ft (P_i) ) \geq \vol_{n-1} Q = 2.
\end{equation}
Combining \eqref{eq:lb2}, \eqref{eq:lb3} and \eqref{eq:lb4}, we obtain
\begin{equation}
\label{eq:lb5}
V_t \geq C_1^{-1} N^{1/(1-n)} \left ( 2^n \Omega_{n-1}^{-1} \right ) ^{n/(n-1)} \geq C_5 N^{1/(1-n)},
\end{equation}
where $C_5$ depends only on $n$ and $\eta$.
Since $N/t^{1-n} \to 2$ as $t\to 0$, by \eqref{eq:lb1} and \eqref{eq:lb5} we have
\[ C_4 \epsilon t \geq 2^{1/(1-n)}C_5 t ,\]
which yields a contradiction if $\epsilon$ is small enough. 

\subsection{The upper bound}

Suppose $\ft$ satisfies the hypotheses of Theorem \ref{thm:1}, $\epsilon >0$ is small and for some $t_0 < M-1$, some $0<t_2<1/2$ and all $0<t<t_2$, we have
\[ \frac{ \widetilde{\rho_f} (t_0+t) - \widetilde{\rho_f}(t_0) }{t} >\frac{1}{\epsilon}. \]
Our goal here is to show that if $\epsilon$ is too small, we obtain a contradiction.

As with the lower bound, set $S_t = Q \times [t_0,t_0+t]$ and let $V_t = \vol_n \ft (S_t)$.
By Lemma \ref{lem:4} it follows that for $0<t<\min \{ t_1,t_2 \}$ we have
\begin{equation}
\label{eq:ub1}
V_t \geq \frac{t}{C_4\epsilon}.
\end{equation}

Next, cover $Q \times \{t_0 \}$ by $(n-1)$-dimensional cubes of side length $( \lfloor 1/t \rfloor )^{-1}$ and form the $n$-dimensional boxes $P_i$, for $i=1,\ldots, N$ with base $Q_i$. Since the boxes $P_i$ cover $S_t$ with overlaps only on their boundaries, we have
\[ V_t = \sum_{i=1}^N \vol_n \ft (P_i ).\]
By Lemma \ref{lem:qsp}, this yields
\[ V_t \leq C_1 \sum_{i=1}^N \left ( \operatorname{diam} \ft (P_i) \right )^n.\]
Then by Lemma \ref{lem:2}, we obtain
\[ V_t \leq C_1 C_2^n \sum_{i=1}^N \left ( \nu ( \ft (P_i) ) \right )^n .\]
Now applying Lemma \ref{lem:3}, we see that
\begin{equation}
\label{eq:ub2} 
V_t \leq C_1C_2^n C_3^{n/(n-1)} \sum_{i=1}^N \left ( \vol_{n-1} \ft (Q_i) \right )^{n/(n-1)} .
\end{equation}

Next, we use the fact that $f$ is a BIP map. Setting
\[ \gamma_t(x_1,\ldots, x_{n-1}) = \ft (x_1,\ldots, x_{n-1},t),\]
then for all $t<M$,
\[ \int_Q \left | \left | \Pi \left ( \frac{ \partial \gamma_t}{\partial x_1 } , \ldots, \frac{ \partial \gamma_t}{\partial x_{n-1} } \right ) \right | \right |^{n/(n-1)} _2 dV \leq P.\]
As a shorthand, for $y_i \in Q$ and $t<M$, we write $\Pi(y_i)$ for the vector
\[  \Pi \left ( \frac{ \partial \gamma_t}{\partial x_1 }(y_i) , \ldots, \frac{ \partial \gamma_t}{\partial x_{n-1} }(y_i) \right ) .\]

We may suppose that $t$ is small enough that the partition of $Q\times \{t_0 \}$ given by $Q_1,\ldots, Q_N$ yields
\begin{equation}
\label{eq:ub3} 
\vol_{n-1} \ft (Q_i)  \leq 2||\Pi(y_i)||_2 \vol_{n-1} (Q_i) \leq 2^{n-1} ||\Pi(y_i)||_2 t^{n-1},
\end{equation}
where $y_i \in Q_i$ is chosen so that all the partial derivatives of $\gamma_{t_0}$ exist at $y_i$, and the last inequality follows from \eqref{eq:qsp0}. We may also suppose that $t$ is chosen small enough that
\begin{equation}
\label{eq:ub4} 
\sum_{i=1}^N ||\Pi(y_i)||_2^{n/(n-1)} t^{n-1} \leq 2 \int_Q  \left | \left | \Pi \left ( \frac{ \partial \gamma_t}{\partial x_1 } , \ldots, \frac{ \partial \gamma_t}{\partial x_{n-1} } \right ) \right | \right |^{n/(n-1)} _2 dV.
\end{equation}

It now follows from \eqref{eq:ub2}, \eqref{eq:ub3} and \eqref{eq:ub4} that
\begin{align*} 
V_t &\leq C_1C_2^n C_3^{n/(n-1)} \sum_{i=1}^N \left ( \vol_{n-1} \ft (Q_i) \right )^{n/(n-1)}\\
&\leq 2^n C_1C_2^nC_3^{n/(n-1)} \sum_{i=1}^N\left (  ||\Pi(y_i)||_2^{n/(n-1)} t^{n-1} \right ) t \\
&\leq 2^{n+1}C_1C_2^nC_3^{n/(n-1)} \left ( \int_Q  \left | \left | \Pi \left ( \frac{ \partial \gamma_t}{\partial x_1 } , \ldots, \frac{ \partial \gamma_t}{\partial x_{n-1} } \right ) \right | \right |^{n/(n-1)} _2 dV \right ) t \\
& \leq C_6Pt,
\end{align*}
where $C_6$ depends only on $n$ and $K$.
Combining this with \eqref{eq:ub1}, we have
\[ \frac{t}{C_4\epsilon} \leq C_6Pt,\]
which yields a contradiction for $\epsilon$ small enough.

We emphasize that in both the upper and lower bounds the contradiction is obtained by finding $\epsilon$ to be too small relative to constants that depend only on $n$, $K$ and $P$. Hence $\widetilde{\rho_f}$ is locally $L$-bi-Lipschitz for some $L$ depending only on $n$, $K$ and $P$, which completes the proof of Theorem \ref{thm:1}.

\section{Non-rectifiable images}

\begin{proof}[Proof of Proposition \ref{prop:0}]

Here we show in dimension two that if the image of a cross-section of the beam is a non-rectifiable curve, then $\widetilde{\rho_f}$ is not bi-Lipschitz. Recall that $\ft$ is defined in $[0,2] \times (-\infty, M)$, so suppose that $\gamma (x):= \ft (x,t_0)$ parameterizes a non-rectifiable curve. In particular, its Hausdorff $1$-measure is infinite.

Using the notation established above, we have by Lemma \ref{lem:qsp} that
\[ V_t = \sum_{i=1}^N \vol_n \ft (P_i)  \geq \frac{1}{C_1} \sum _{i=1}^N \left ( \diam \ft (P_i) \right )^2.\]
Since $\gamma$ has infinite Hausdorff $1$-measure, it follows that given $\epsilon >0$ we can find $t_1>0$ so that if $0<t<t_1$, we have
\[ \sum_{i=1}^{N} \diam \ft (P_i) > \frac{1}{\epsilon} .\]
By \eqref{eq:lp} and \eqref{eq:qsp0} we then have
\[ V_t \geq \frac{1}{C_1 N} \left ( \sum_{i=1}^N \diam \ft (P_i) \right )^2 > \frac{t}{C_1 \epsilon } \]
for $0<t<t_1$. An application of Lemma \ref{lem:4} then shows that
\[  \frac{ \widetilde{\rho _f} (t_0+t) - \widetilde{\rho _f} (t_0) } {t} > \frac{1}{C_1C_4 \epsilon} \]
for $0<t<t_1$. Choosing $\epsilon $ small enough shows that $\widetilde{\rho_f}$ is not bi-Lipschitz.

\end{proof}

\section{Simple infinitesimal spaces}

\begin{proof}[Proof of Theorem \ref{thm:2}]
Recall the asymptotic representative
\[ D(x) = \rho_f(x) g(x/|x|), \]
where $g$ is the unique element in $T(x_0,f)$. By \cite[Proposition 4.18]{GMRV}, 
\begin{equation}
\label{eq:t2e1} 
g(x) = |x|^d g(x/|x|)
\end{equation}
for some $d>0$, and where $g:S^{n-1} \to \R^n$ is BLD.
In our case, $g$ is bijective onto its image, as $i(x_0,f)=1$, and so $g|_{S^{n-1}}$ is bi-Lipschitz. 
Since $g$ is quasiconformal, fixes $0$ and preserves the measure of the unit ball, there exists $C = C(K) \geq 1$ so that $g(S^{n-1})$ is contained in the ring $R = \{ x\in \R^n : 1/C \leq |x| \leq C\}$.
As $\mathcal{Z}$ is bi-Lipschitz on $Q \times \{ 0 \} $ and $\mathcal{Z}^{-1}$ is bi-Lipschitz on $R$, and in particular on $g(S^{n-1})$, it follows that 
\[ \widetilde{g} : Q\times \{0 \}\to B \]
is bi-Lipschitz.
Again letting $\mathcal{P} :B \to Q$ be the orthogonal projection, we see from \eqref{eq:t2e1} that $\widetilde{g} : B \to B$ is given by
\[ \widetilde{g} (x) = \widetilde{g}( \mathcal{P}(x)  ) + (0,\ldots, 0, d [x]_n  ).\]
Since the restriction of $\widetilde{g}$ to each slice $Q \times \{t \}$ is bi-Lipschitz, and since $\widetilde{g}$ is quasiconformal, it follows from the bounded linear distortion that $\widetilde{g}$ is itself bi-Lipschitz.

Now, the Zorich transform of $D$ is given by
\[ \widetilde{D} (x) = \widetilde{g} ( \mathcal{P}(x) ) + ( 0,\ldots, 0, \widetilde{\rho_f}([x]_n) ).\]
By Theorem \ref{thm:1}, $\widetilde{\rho_f}$ is bi-Lipschitz on $(-\infty, M)$. 
For every $t<M$, it follows that there exists $t' \in \R$ such that
\[ \widetilde{D} |_{ Q \times \{ t \} } = \widetilde{g} |_{Q \times \{ t' \} }.\]
Writing $h(t) = \widetilde{\rho_f}(t) / d$, the map $\widetilde{g}^{-1} \circ \widetilde{D}$ is nothing other than
\[ (x_1,\ldots, x_{n-1} , x_n ) \mapsto (x_1 , \ldots, x_{n-1} , h(x_n) ).\]
Since $h$ is bi-Lipschitz on $(-\infty, M)$, it follows that $\widetilde{g}^{-1} \circ \widetilde{D}$ is bi-Lipschitz on $B_M$. Since $\widetilde{g}$ is bi-Lipschitz, we conclude that $\widetilde{D}$ is also bi-Lipschitz and thus that $D$ itself is quasiconformal.
\end{proof}

\end{document}